\pgfplotsset{every axis/.append style={
 axis x line=middle, 
 axis y line=middle, 
 axis line style={-,color=blue}, 
 xlabel={$x$}, 
 ylabel={$y$}, 
 }}
\pgfplotsset{compat=1.13}
\DeclareMathOperator{\Hom}{Hom}
\DeclareMathOperator{\GL}{GL}
\DeclareMathOperator{\Supp}{Supp}
\DeclareMathOperator{\QHS}{{\mathbb Q}HS^3}
\def\cS{{\mathcal S}}
\def\V{{\mathcal V}}
\def\I{{\mathcal I}}
\def\V{{V}}
\def\I{{I}}
\def\ZZ{\mathbb{Z}}
\def\CC{\mathbb{C}}
\def\QQ{\mathbb{Q}}
\def\l{\ell}
\def\cO{\mathcal{O}}
\def\div{\textrm{div}}
\def\pc{\textrm{pc}}
\def\tt{\mathbf{t}}
\def\ZK{Z_K}
\def\Zmin{Z_{\rm{min}}}
\def\val{\mathrm{val}}
\newtheorem{thm}{Theorem}[section] 
\newtheorem{prop}{Proposition}[section]
\newtheorem{lemma}{Lemma}[section]
\newtheorem{def-lemma}{Definition-Lemma}[section]
\theoremstyle{remark}
\newtheorem{rem}{Remark}[section]
\theoremstyle{definition}
\newtheorem{dfn}{Definition}[section]
\newtheorem{exam}{Example}[section]
\let\c@lemma\c@thm
\let\c@prop\c@thm
\let\c@propdef\c@thm
\let\c@proper\c@thm
\let\c@problem\c@thm
\let\c@conj\c@thm
\let\c@cor\c@thm
\let\c@rem\c@thm
\let\c@dfn\c@thm
\let\c@notation\c@thm
\let\c@exam\c@thm
\newcommand{\tX}{{\widetilde {X}}}
\newcommand{\FN}{\mathfrak{N}}
\newcommand{\FR}{\mathfrak{R}}
\newcommand{\FP}{\mathfrak{P}}
\title
{Local invariants of minimal generic curves on rational surfaces}
\date{\today}
\author[J.I. Cogolludo]{Jos{\'e} Ignacio Cogolludo-Agust{\'i}n}
\address{Departamento de Matem\'aticas, IUMA\\
Universidad de Zaragoza\\
C.~Pedro Cerbuna 12\\
50009 Zaragoza, Spain}
\email{jicogo@unizar.es}
\author[T. L\'aszl\'o]{Tam\'as L\'aszl\'o}
\address{Babe\c{s}-Bolyai University, Str. Mihail Kog\u{a}lniceanu nr. 1, 400084 Cluj-Napoca, Romania}
\email{laszlo.tamas@math.ubbcluj.ro}
\author[J. Mart\'in]{Jorge Mart\'in-Morales}
\address{Centro Universitario de la Defensa-IUMA\\
Academia General Militar\\
Ctra.~de Huesca s/n.\\
50090, Zaragoza, Spain}
\email{jorge@unizar.es}
\author[A. N\'emethi]{Andr\'as N\'emethi}
\address{Alfr\'ed R\'enyi Institute of Mathematics,
Hungarian Academy of Sciences,
Re\'altanoda utca 13-15, H-1053, Budapest, Hungary \newline
 \hspace*{4mm} ELTE - University of Budapest, Dept. of Geometry, Budapest, Hungary \newline \hspace*{4mm}
BCAM - Basque Center for Applied Math.,
Mazarredo, 14 E48009 Bilbao, Basque Country, Spain}
\email{nemethi.andras@renyi.mta.hu }
\subjclass[2010]{Primary. 14B05, 32Sxx; Secondary. 14E15}
\keywords{Normal surface singularities, delta invariant of curves, Poincar\'e series, periodic constant, 
twisted duality, rational surface singularities, Weil divisors, Riemann-Roch}
\thanks{The first and third authors are partially supported by MTM2016-76868-C2-2-P and
Gobierno de Arag{\'o}n (Grupo de referencia ``{\'A}lgebra y Geometr{\'i}a'')
cofunded by Feder 2014-2020 ``Construyendo Europa desde Arag\'on''.
The third author is also partially supported by FQM-333 ``Junta de Andaluc{\'\i}a''. \\
The second and fourth authors are supported by NKFIH Grant ``\'Elvonal (Frontier)'' KKP 126683. 
The second author was also supported by ERCEA Consolidator Grant 615655 - NMST, 
and partially by the Basque Government through the BERC 2018-2021 program and Gobierno Vasco Grant IT1094-16, 
by the Spanish Ministry of Science, Innovation and Universities: BCAM Severo Ochoa accreditation SEV-2017-0718.}
\begin{document}

\begin{abstract}
Let $(C,0)$ be a reduced curve germ in a normal surface singularity $(X,0)$.
The main goal is to recover the delta invariant $\delta(C)$
of the abstract curve $(C,0)$ from the topology of the embedding $(C,0)\subset (X,0)$.
We give explicit formulae whenever $(C,0)$ is \emph{minimal generic} and $(X,0)$ is rational
(as a continuation of~\cite{kappa1,kappa2}).

Additionally we prove that if $(X,0)$ is a quotient singularity, then $\delta(C)$ only admits
the values $r-1$ or $r$, where $r$ is the number or irreducible components of $(C,0)$.
($\delta (C)=r-1$ realizes the extremal lower bound, valid only for `ordinary $r$--tuples'.)
\end{abstract}

\maketitle

\section{Introduction}

The present note is a natural continuation of the manuscripts \cite{kappa1,kappa2}. The main setup is the following.
We fix a complex normal surface singularity $(X,0)$,
and an embedded reduced curve germ $(C,0)$ on it. We wish to connect the analytic invariants of the abstract curve $(C,0)$,
eg. its delta invariant $\delta(C)$, with the embedded geometry of the pair $(C,0)\subset (X,0)$.
The goal is to recover $\delta(C)$ from the embedded topology of the pair under certain restrictions (eg. when
$(X,0)$ is rational).

One can attack this problem in two different ways, either analytically as in \cite{kappa1}, or via topological
tools, as in~\cite{kappa2}. The first one is based on vanishing theorems (valid from a certain bound of Chern classes),
the second one is based on the machinery of multivariable zeta (Poincar\'e) functions/series, their counting functions
and surgery formulae. These formulae are valid again only for certain bound of the corresponding Chern classes (in this case
one relies on asymptotically true formulae, and the bounds from which their validity is true has geometrical consequences).

In both theories, these bounds impose some restrictions on the topology of the curve $(C,0)$,
in such cases the discussion is optimal.
In this article we call such curves \emph{minimal generic curves} of $(X,0)$.

In the body of the paper we determine $\delta(C)$ for any `minimal generic curve germ' $(C,0)$ embedded into a
rational surface singularity $(X,0)$ from the topology of the embedding. The proof is based on a surgery formula of
the multivariable topological Poincar\'e series and several
recent developments in the area (see eg.~\cite{LNehrhart,LSzMonoids,LNN,LNNdual,kappa1,kappa2}).

Surprisingly, it turns out that the minimality condition imposed by being `minimal generic' is reflected as an extremal
behavior in the value of the delta invariant as well.

Recall that if $(C,0)$ is an abstract reduced curve germ with $r$ irreducible components, then $\delta(C)\geq r-1$,
and $\delta(C)=r-1$ is realized for the very special family of `ordinary $r$--tuples'. They are analytically isomorphic
with the union of the $r$ axes of $(\CC^r,0)$, or with the union of $r$ smooth generic lines in $(\CC^r,0)$.

It turns out that `minimal generic' curve germs on a cyclic quotient singularity are all ordinary $r$--tuples, a fact firstly noticed in \cite{CM} (a work, which partly motivated our work).
In the present note we indicate several proofs of this fact (of rather different nature).

But what is even more surprising is that this behavior almost survives whenever $(X,0)$ is an arbitrary
quotient singularity.

Our main result states that in such a case (i.e. if $(X,0)$ is quotient singularity and $(C,0)$ is a minimal
generic curve germ on it) $\delta(C)\in \{r-1,r\}$. ($\delta (C)=r$ for $r\geq 3$ means that $(C,0)$ is
isomorphic with $r$ lines in $(\CC^{r-1},0)$ in generic position.)

The paper is organized as follows.
In the preliminary section~\ref{sec:preliminaries} we recall all the needed material 
(with additional comments and remarks, in order to help
the understanding of the main connections and ideas) regarding normal surface singularities.
We discuss the definition of \emph{minimal generic curves}, their connection with special minimal Chern classes 
(rational divisors), and the related Laufer algorithm. Then we continue with a material related with multivariable 
series, the topological and analytical Poincar\'e series, counting functions, and finally the surgery formula. 
We refer to~\cite{LNehrhart,LSzMonoids,LNN,LNNdual,kappa1,kappa2} for more details.
Here we also discuss in short the connection with the kappa invariant, which was historically one of the major
motivations, cf.~\cite{ji-tesis,CM}.

In section~\ref{sec:cyclic} we discuss the case when $(X,0)$ is a cyclic quotient singularity. 
A formula for the delta invariant of minimal generic curves is already known (see~\cite{CM}), but we present
several alternative proofs using different techniques, which shed some light into the general problem.

Section~\ref{sec:delta-invariant} is devoted to developing the theory of minimal generic curves on rational 
surface singularities with star-shaped minimal resolution graphs (here in some parts the
rationality is not even needed). In Theorem~\ref{thm:SUM} we provide a formula for the delta invariant of 
minimal generic curves on rational surface singularities with a star-shaped graph.

In section~\ref{sec:quotient} the main result is applied to quotient surface singularities. As a result, 
in Theorem~\ref{thm:SUM2:alt} we describe the formula for the delta invariant of minimal generic curves
on quotient singularities based only on the information provided by the dual graph.

Section~\ref{ss:PROOF} is devoted to proving Theorem~\ref{thm:SUM2:alt} and describing the exceptional 
cases where the general formula does not apply. The different nature of these cases should definitely 
be better understood.

\section{Preliminaries}
\label{sec:preliminaries}

\subsection{Topological invariants of surface singularities}

\subsubsection{\bf Normal singularities, good resolutions, and dual graphs}
Let us consider a complex normal surface singularity $(X,0)$, that is, a complex surface germ such that its local ring
of functions $\cO_{X,0}$ is integrally closed in its field of fractions. Equivalently, this means that any bounded holomorphic
function on $X\setminus \{0\}$ can be holomorphically extended to~$X$.

Let $\pi:\tilde X\to X$ be a \emph{resolution}, that is, a proper holomorphic map from a smooth surface $\tilde X$ to a given
representative of $(X,0)$ such that $\pi$ is biholomorphic outside $\pi^{-1}(\{0\})$. Moreover, we will require $\pi$
to be a \emph{good resolution}, which means that the exceptional set $\pi^{-1}(\{0\})$ is a union of smooth curves that
intersect each other transversally.

A good resolution for normal surfaces always exists, but is not unique. Given a good resolution, we define its
\emph{dual graph}, as a decorated graph, say $\Gamma$, whose set of vertices, $\V$, is in bijection with
$\{E_v\}_{v\in \V}$ the irreducible components of the exceptional set of $\pi$. Two vertices $u,v$ are connected in $\Gamma$
if and only if $E_u\cap E_v\neq\emptyset$. Moreover, each vertex is decorated with two numbers, namely $E_v^2,g(E_v)$,
where $E_v^2$ is the self-intersection of the compact Riemann surface $E_v$ in $\tilde X$ and $g(E_v)$ is its genus.

Associated to the dual graph $\Gamma$ there is an \emph{intersection matrix} $M=(m_{u,v})_{u,v\in \V}$ given as the
intersection matrix of the curves $\{E_v\}_{v\in \V}$ in $\tilde X$, that is, $m_{u,v}:=(E_u, E_v)$. This matrix
is given as the numbers $E^2_v$ in the diagonal entries $m_{v,v}$ and the incidence matrix coefficients of $\Gamma$
outside the diagonal.

\subsubsection{\bf The link of a normal surface singularity}\label{sec:link}
If one assumes $(X,0)$ is embedded in an affine germ $(\CC^N,0)$, then the link $\Sigma$ of $(X,0)$ is defined as
the intersection of $X$ with a small enough $(2N-1)$-dimensional sphere centered in $0$.
The topology of $\Sigma$ does not depend on the embedding or the small-enough sphere.
Moreover, $(X,0)$ is a cone over~$\Sigma$ and
hence~$\Sigma\sim X\setminus \{0\}\cong \tilde X\setminus \pi^{-1}(0)$, $\Sigma \cong \partial \tilde X$,
where $\sim$ represents the same homotopy type.

In this paper we assume that {\it the link $\Sigma$ is a rational homology sphere} ($\QHS$ for short).
This is equivalent to asking the dual graph $\Gamma$ of a good resolution to be a connected tree,
where all $E_v$'s are rational, that is $g(E_v)=0$ for all~$v\in \V$.

\subsubsection{\bf Divisor lattice structure}\label{sec:lattice}
Define the lattice $L$ as $H_2(\tilde X,\ZZ)$. It is generated by the exceptional divisors $E_v$, $v\in \V$, that is,
$L=\oplus_{v\in \V} \ZZ\langle E_v \rangle$. In the homology exact sequence of the pair $(\tilde X, \Sigma)$ one has
$H_2(\Sigma,\ZZ)=0$, $H_1(\tilde X, \ZZ)=0$, hence the exact sequence has the form:
\begin{equation}
\label{eq:ses}
0 \to L \to H_2(\tilde X,\Sigma,\ZZ) \to H_1(\Sigma,\ZZ) \to 0.
\end{equation}
Set $L':= \Hom(H_2(\tilde X,\ZZ),\ZZ)$.
The Lefschetz-Poincar\'e duality $H_2(\tilde X,\Sigma,\ZZ)\cong H^2(\tilde X,\ZZ)$
defines a perfect pairing $L\otimes H_2(\tilde X,\Sigma,\ZZ)\to \ZZ$. Hence $L'$ can be identified with
$H_2(\tilde{X}, \Sigma, \ZZ)$. By~\eqref{eq:ses} $L'/L\cong H_1(\Sigma,\ZZ)$, which will be denoted by $H$.
Note that since $\Sigma$ is a $\QHS$, $H$ is a finite abelian group. In fact, even if $\Sigma$ is not
$\QHS$, since the intersection form on $L$ is non--degenerate,
$H_2(\tilde{X},\ZZ)\to H_2(\tilde{X},\Sigma, \ZZ)$ is injective, and $L'/L= {\rm Tors}(H_1(\Sigma,\ZZ))$.

As mentioned above, since the intersection form is non--degenerate, $L'$ embeds into
$L_{{\mathbb Q}}:=L\otimes {\mathbb Q}$, and it can be identified with the rational cycles
$\{\ell'\in L_{{\mathbb Q}} \mid (\ell',L)_{{\mathbb Q}}\in \ZZ\}$, where
$(\,,\,)$ denotes the intersection form on $L$ and $(\,,\,)_{{\mathbb Q}}$ its extension to $L_{{\mathbb Q}}$.
Hence, in the sequel we regard $L'$ as $\oplus_{v\in \V} \ZZ\langle E^*_v \rangle$,
the lattice generated by the rational cycles $E^*_v\in L_{{\QQ}}$,
$v\in \V$, where $(E_u^*,E_v)_{{\QQ}}=-\delta_{u,v}$ (Kronecker delta) for any $u,v\in \V$.
The inclusion $L\subset L'$ in the bases $\{E_v\}_v$ and $\{E_v^*\}_v$ is given by the intersection
matrix of $L$ as follows:
$$E_v=-\sum_{u\in \V} (E_v,E_u) E_u^*.$$
Given an element $\ell'\in L'$ we denote by $[\ell']\in H$ its class in $H$.
In fact, $L'\subset L_{{\mathbb Q}}$ sits naturally in the
superlattice $\frac{1}{d}L$, where $d:=|H|$ is the order of $H$
and $L$ is identified with its integral points.
The inclusion $L'\subset L_\QQ$ is given by $-M^{-1}$, where $M$ is the
intersection matrix of $\Gamma$. The matrix $-M^{-1}$ has positive entries, since $M$ is negative definite.
Note that the coefficients of the cycles $E^*_v$'s, are the columns of $-M^{-1}$.

The elements $E^*_v$ have the following geometrical interpretation as well: consider $\gamma_v\subset \tilde X$ a
curvette associated with $E_v$, that is, a smooth irreducible curve in $\tilde X$ intersecting $E_v$ transversally.
Then $\pi^*\pi_*(\gamma_v)=\gamma_v+E^*_v$ (see \ref{ss:TOTALTR} for the notation $\pi^*$).

\subsubsection{\bf The canonical divisor and canonical cycle}
Let $K_{\tilde X}$ be the canonical divisor in the smooth surface $\tilde X$. The canonical divisor in $X$ is defined as
$K_X:=\pi_*(K_{\tilde X})$. Note that $K_\pi:=K_{\tilde X}-\pi^*(K_X)$ has support on the exceptional set $\pi^{-1}(0)$.
The divisor $K_\pi$ is called the relative canonical divisor of $\pi$, and it is determined topologically by the
adjunction formula, which imposes the following linear system
\begin{equation}
\label{eq:KX}
(K_\pi+E_v,E_v)=-2, \textrm{ for all } v\in \V
\end{equation}
In particular $K_{\pi}\in L'$.
In some cases, it is more convenient to use the (anti)canonical cycle $\ZK:=-K_\pi$. Using~\eqref{eq:KX},
$\ZK$ can be written as
\begin{equation}\label{eq:ZK}
\ZK=E-\sum_{v\in \V} (2-{\rm val}_v) E^*_v,
\end{equation}
where $E=\sum_{v\in \V}E_v$ and ${\val}_v$ is the valency of $v$.

Recall that by the Riemann-Roch theorem $\chi(\cO_\ell)=-\frac{1}{2}(\ell, \ell-\ZK)$.
This motivates the definition $\chi(\ell'):=-\frac{1}{2}(\ell', \ell'-\ZK)\in\QQ$ for any $\ell'\in L'$.

\subsubsection{\bf The $H$--partition of $L'$}
The lattice $L'$ admits a partition parametrized by the group $H$, where for any $h\in H$ one sets
\begin{equation}
\label{eq:Lprime}
L'_h=\{\ell'\in L'\mid [\ell']=h\}\subset L'.
\end{equation}
Note that $L'_0=L$.
Given $h\in H$ one can define $r_h:=\sum_v r_v E_v\in L'_h$ as the unique element of $L_h'$ such that
$0\leq r_v<1$ for all $v$. Equivalently, $r_h=\sum_v \{l'_v\} E_v$ for any $\ell'=\sum_v l'_v E_v\in L'_h$,
where $0\leq \{\cdot\}<1$ represents the usual fractional part.

\subsubsection{\bf Examples. Quotient singularities}\label{sss:Q}
An infinite family of examples of normal surface singularities with a $\QHS$-link is provided by the quotient $X=\CC^2/G$
of $\CC^2$ by the action of a small finite subgroup $G$ of $\GL(2,\CC)$. These are called \emph{quotient surface} singularities.
 Note that $H=G/[G,G]$ is finite, hence any such $(X,0)$ has a $\QHS$-link.

If $G_1, G_2\subset \GL(2,\CC)$, then $X_1=\CC^2/G_1$ and $X_2=\CC^2/G_2$ are isomorphic if and only if $G_1$ and $G_2$
are conjugated. In particular, it can happen that $G_1$ and $G_2$ are isomorphic as abstract groups but $X_1$ and $X_2$ are not.

A full classification of the small finite subgroups of $\GL(2,\CC)$ up to conjugation was provided by
Brieskorn~\cite{Brieskorn-Rationale} (also a complete list of dual graphs is given by Nikulin in~\cite{Nikulin-delPezzo}).
The simplest ones are the finite cyclic groups, $G=\ZZ_d$. In this case any $0<q<d$, ${\rm gcd}(d,q)=1$ provides a
possible linear representations on $\CC^2$ given by the diagonal matrices
$$\xi\mapsto \left(
\array{cc}
\xi& 0\\ 0 & \xi^q
\endarray
\right),
$$
where $\ZZ_d=\{\xi\in \CC\,:\, \xi^d=1\}$. The quotient space usually is denoted by $X_{d,q}$, or $\frac{1}{d}(1,q)$.
The quotient
spaces $\frac{1}{d}(1,q_1)$ and $\frac{1}{d}(1,q_2)$ are isomorphic if and only if
$q_2\in\{q_1,q_1'\}$, where $q_1q_1'\equiv 1 \ ({\rm mod} \ d)$.

For $d=2$ there is only one quotient singularity, which can also be given by the germ $x^2+y^2+z^2=0$.
However, for $d=3$ there are two non--isomorphic ones: $\frac{1}{3}(1,1)$ is a complete intersection singularity
in $\CC^4$ whereas $\frac{1}{3}(1,2)$ is a hypersurface in $\CC^3$ (moreover, their embedding dimensions are 4 and
3 respectively).

For more details (including the minimal resolution graphs of $\frac{1}{d}(1,q)$) see section~\ref{sec:cyclic}.

From a different point of view, McKay~\cite{McKay-Graphs} proved that there is a 1--1 correspondence
between minimal graphs of quotient singularities and conjugacy classes of finite subgroups in $\GL(2,\CC)$.
This is known as the McKay correspondence.
(E.g., regarding the previous pair,
the graph of $\frac{1}{3}(1,1)$ has one vertex, whereas $\frac{1}{3}(1,2)$ has two vertices.)

\subsection{The notion of minimal generic curve germs of $(X,0)$}\label{ss:HrepLip}\
We define the \emph{rational Lipman cone} by
$$\cS_\QQ:=\{\ell'\in L_\QQ \mid (\ell',E_v)\leq 0 \ \mbox{for all} \ v\in \V\},$$
which is a cone generated over $\QQ_{\geq 0}$ by $E^*_v$.
By \ref{sec:lattice}, if $s\in \cS_\QQ\setminus \{0\}$ then all the $E_v$--coordinates of $s$ are strict positive.

Define $\cS':=\cS_\QQ\cap L'$ as the semigroup (monoid) of anti-nef rational cycles of $L'$; it is generated over
$\mathbb{Z}_{\geq 0}$ by the cycles $E^*_v$. As mentioned in section~\ref{sec:lattice}, any element of $\cS'$ can
be obtained as the exceptional part of the pull-back of an effective divisor of~$X$.

Set also $\cS:=\cS_\QQ\cap L$.

\subsubsection{\bf Artin's fundamental cycle and other minimal cycles}
The Lipman cone $\cS'$ also admits a natural equivariant partition $\cS'_{h}=\cS'\cap L'_h$ indexed by $H$.

For $\ell'_1,\ell'_2\in L_\QQ$ with $\ell'_i=\sum_v l'_{iv}E_v$ ($i=\{1,2\}$)
one considers an order relation $\ell'_1\geq \ell'_2$ defined coordinatewise by $l'_{1v}\geq l'_{2v}$
for all $v\in\V$. In particular, $\ell'$ is an effective rational cycle if $\ell'\geq 0$.
We set also $\min\{\ell'_1,\ell'_2\}:= \sum_v\min\{l'_{1v},l'_{2v}\}E_v$ and
analogously $\min\{F\}$ for a finite subset $F\subset L_\QQ$.

First note that $\cS=\cS'_0$ has the following properties:
\begin{enumerate}
 \item if $Z=\sum n_vE_v\in \cS$ and $Z\not=0$, then $n_v>0$ for all $v\in \V$,
 \item if $Z_1,Z_2\in \cS$, then $Z_1+Z_2\in \cS$,
 \item if $Z_1,Z_2\in \cS$, then $\min \{Z_1,Z_2\}\in \cS$.
\end{enumerate}
As a consequence, one can define the unique minimal element $\Zmin$
of $\cS\setminus \{0\}$, called the \emph{Artin's fundamental cycle}
(\emph{minimal cycle} or \emph{numerical cycle}).
It satisfy $\Zmin\geq E$.

Similarly, for any $h\in H$, $\cS_h$ has the following properties:
\begin{enumerate}\label{prop:GenLauferAlg}
\item $s_1,s_2\in \cS'_{h}$ implies $s_2-s_1\in L$ and hence $\min\{s_1,s_2\}\in \cS'_h$,
\item\label{prop:sh}
for any $s\in L_{{\QQ}}$ the set $\{s'\in \cS'_h \mid s'\not\geq s\}$ is finite,
\item for any $h$ there exists a unique \textit{minimal cycle} $s_h:=\min \{\cS'_{h}\}$~(see~\ref{ss:GLA} below).
\end{enumerate}
Note that for $h=0$ one has $s_0= \min\{\cS'_0\}=0$, while $\Zmin=\min \{\cS'_0\setminus 0\}\geq E$.

In general, for an arbitrary graph it is very difficult to determine the cycles $s_h$.
For strings or star-shaped graphs see e.g.~\cite{NemOSZ}. These will be revised in this note as well.

\subsubsection{\bf Generalized Laufer's algorithm}\label{ss:GLA}
\cite[Lemma 7.4]{NemOSZ} For any $\ell'\in L'$ there exists a unique minimal element $s(\ell')$ of the set
$\{s\in \cS' \mid s-\ell'\in L_{\geq 0}\}$. It can be obtained by the following algorithm.
Set $x_0:=\ell'$. Then one constructs a {\it computation sequence} $\{x_i\}_i$ as follows.
If $x_i$ is already constructed and $x_i\not\in\cS'$ then there exits some $E_{v(i)}$ such that $(x_i,E_{v(i)})>0$.
Then take $x_{i+1}:=x_i+E_{v(i)}$.
Then the procedure after finitely many steps stops,
say at $x_t$, and necessarily $x_t=s(\ell')$.

Note that $s(r_h)=s_h$ and $r_h\leq s_h$, however, in general $r_h\neq s_h$.
(This fact does not contradict the minimality of $s_h$ in $\cS'_h$ since $r_h$ might not sit in $\cS'_h$.)

\subsubsection{}\label{sss:LauferCrit} {\bf Laufer's criterion for rationality. }
The classical algorithm of Laufer constructs a computation sequence from $E$ (or, from any $E_v$) to $\Zmin$.
According to \cite{Laufer-rational}, $(X,0)$ is rational if and only if along this computation sequence
$(x_i,E_{v(i)})=1$ always.
(In particular, $\chi(\Zmin)=1$ too, this identity is Artin's criterion for rationality.)

\subsubsection{\bf Minimal generic $h$--curves on a surface singularity}\label{ss:TOTALTR}
Consider a reduced curve (or an effective Weil divisor) $(C,0)$ in $(X,0)$.
Let $\tilde C$ denote its strict transform by~$\pi$.
Then we define a cycle $\l'_C\in L'$ in such a way that the divisor $\l'_C+{\tilde C}$ is numerically trivial:
$(\l'_C+{\tilde C},E_v)=0$ for all $v$. We write this fact in the language of divisors as $\pi^*(C)=\l'_C+{\tilde C}$.

\begin{dfn}
\label{dfn:generic} Fix some $h\in H$ and the minimal good resolution. We say that $(C,0)$ is
\begin{enumerate}
 \item an \emph{$h$--curve} if $[\l'_C]=h$;
 \item a \emph{minimal} $h$--curve if $\l'_C=s_h$;
 \item a \emph{minimal generic} $h$--curve if $\l'_C=s_h$ and all the components of $\tilde C$ are
smooth and do not intersect each other, and they intersect $E$ transversally.
\end{enumerate}
Also, a curve $(C,0)\subset (X,0)$ is called \emph{minimal generic} if it is a 
minimal generic $h$--curve for some~$h\in H$.
\end{dfn}

Note that minimal generic curves were introduced as \emph{generic} curves in~\cite{CM} and studied in the context
of cyclic quotient singularities, however we think the term \emph{minimal generic} is more appropriate.

From the definition it follows that for $h=0$ the minimal $h$--curve is the empty curve, what we might include or
we might eliminate (though when we compare this picture with some other classification statements usually it is
convenient to have the empty curve also included).

Note that for fixed $h\in H$, all minimal generic $h$--curves are topologically equivalent: the number of components
intersecting transversally $E_v$ is exactly $c_v$, where $s_h=\sum_vc_vE_v^*$.
Analytically (in ${\rm Pic}(\tilde X)$) these curves might be different. However, if $(X,0)$ is rational
(when ${\rm Pic}(\tilde X)=L'$), for fixed $h$, all the minimal generic $h$--curves are linearly equivalent.
In this case we will call such a curve `the' minimal generic curve associated with~$h$.

A minimal generic curve $(C,0)$ is not necessarily irreducible and if irreducible it is not necessarily smooth
(though $\tilde C$ is smooth) (see Examples~\ref{sec:exceptional16} and~\ref{sec:exceptional16}).
Note also that if we take an irreducible smooth $\tilde C$ intersecting $E_v$ transversally,
then $C=\pi(\tilde C)$ is a generic $[E^*_v]$--curve, but not necessarily minimal.
Indeed, eg., since the $E_8$ graph is unimodular, the only minimal generic curve is the empty one.

In general, as $s_h$ is hard to characterize, the position of the (strict transforms of the) minimal generic
curves is also hard to characterize.

However, if $(X,0)$ is a cyclic quotient singularity, then any irreducible transversal smooth curve $\tilde C$
(intersecting any $E_v$) represents its minimal generic $h$--class (a fact proved in~\cite{CM}, see also the
detailed discussion in section~\ref{sec:cyclic}).
This provides in the case of the minimal resolution of a cyclic quotient a bijection
between irreducible minimal generic curves, the vertices of the dual graph, and the full special $\cO_X$-modules 
in the spirit of the McKay correspondence.

\subsection{Multivariable series}
\label{ss:set}
Let $\ZZ[[L']]$ be the $\ZZ$-module consisting of the $\ZZ$-linear combinations of the monomials
$\mathbf{t}^{\ell'}:=\prod_{v\in \V}t_v^{l'_v}$, where $\ell'=\sum_{v}l'_v E_v\in L'$.
Note that it is a $\ZZ$-submodule of the formal power series in variables $t_v ^{1/d}, t_v^{-1/d}$, $v\in V$.

Consider a multivariable series $S(\tt)=\sum_{\ell'\in L'}a(\ell')\tt^{\ell'}\in \ZZ[[L']]$.
Let $\Supp S(\tt):=\{\ell'\in L' \mid a(\ell')\neq 0\}$ be the support of the
series and we assume the following finiteness condition: for any $x\in L'$
\begin{equation}\label{eq:finiteness}
\{\ell'\in \Supp S(\tt)\mid \ell'\not\geq x\} \ \ \mbox{is finite}.
\end{equation}

Throughout this paper we will use multivariable series in $\ZZ[[L']]$ as well as in $\ZZ[[L'_\I]]$
for any $\I\subset \V$, where $L'_\I={\rm pr}_I(L')$ is the projection of $L'$ via
${\rm pr}_I:L_{{\QQ}} \to \oplus_{v\in \I} \QQ \langle E_v\rangle$.
For example, if $S(\tt)\in \ZZ[[L']]$ then $S(\tt_{\I}):=S(\tt)|_{t_v=1,v\notin \I}$ is an element of
$\ZZ[[L'_\I]]$. In the sequel we use the notation $\ell'_I=\ell'|_I:= {\rm pr}_I(\ell')$ and
$\tt^{\ell'}_\I:=\tt^{\ell'}|_{t_v=1,v\notin \I}$ for any $\ell'\in L'$.
Each coefficient $a_\I(x)$ of $S(\tt_\I)$ is obtained as a summation of certain
coefficients $a(y)$ of $S(\tt)$, where $y$ runs over
$\{\ell'\in \Supp S(\tt)\mid \ell'|_I =x \}$
(this is a finite sum by~\eqref{eq:finiteness}). Moreover, $S(\tt_\I)$ satisfies a similar
finiteness property as~\eqref{eq:finiteness} in the variables~$\tt_\I$.

Any $S(\tt)\in \ZZ[[L']]$ decomposes in a unique way as $S(\tt)=\sum_h S_h(\tt)$,
where $S_h(\tt):=\sum_{[\ell']=h}a(\ell')\tt^{\ell'}$. The series $S_h(\tt)$ is called the $h$-part of $S(\tt)$.
Note that the $H$-decomposition of the series $S(\tt_{\I})$ is not well defined. That is, the restriction
$S_h(\tt)|_{t_v=1,v\notin \I}$ of the $h$-part $S_h(\tt)$ cannot be recovered from $S(\tt_\I)$ in general,
since the class of $\ell'$ cannot be recovered from $\ell'|_I$. Hence, the notation $S_h(\tt_\I)$
(defined as $(S_h(\tt))|_{t_v=1,v\notin \I}$) is not ambiguous, but requires certain caution.

\subsubsection{\bf The multivariable topological Poincar\'e series}
The \textit{multivariable topological Poincar\'e series} (cf.~\cite{CDG-Poincare,CDGZ-PSuniversalAC,Nem-CLB})
is the Taylor expansion $Z(\tt)=\sum_{\ell'} z(\ell')\tt^{\ell'} \in\ZZ[[L']]$
at the origin of the rational \textit{zeta function}
\begin{equation}\label{eq:1.1}
f(\tt)=
\prod_{v\in \V} (1-\tt^{E^*_v})^{{\rm val}_v-2}.
\end{equation}
One can consider its $H$-decomposition $Z(\mathbf{t})=\sum_{h\in H}Z_h(\mathbf{t})$.
The support of $Z(\tt)$ is in the Lipman cone $\cS'$.
Note that by~\ref{prop:GenLauferAlg}\eqref{prop:sh} for any $x\in L'$ the set $\{\ell'\in \cS'\mid \ell'\not\geq x\}$
is finite. Hence for any $h\in H$ one can consider the {\it counting function} $Q_h^{\Gamma}(\tt)=Q_h(\tt)$ of
$Z_h(\tt)$ defined as
\begin{equation}\label{eq:count1}
Q_h(\tt): L'\longrightarrow \ZZ, \ \ \ \
x\mapsto \sum_{\ell'\ngeq x, \ [\ell']=h} z(\ell').
\end{equation}
Furthermore, for any $h\in H$ and $I\subset \V$ one can also consider the {\it counting function} $Q_{h,I}^{\Gamma}(\tt)=Q_{h,I}(\tt)$ of
$Z_h(\tt_I)$ defined as
\begin{equation}\label{eq:count2}
Q_{h,I}(\tt): L'\longrightarrow \ZZ, \ \ \ \
x\mapsto \sum_{\ell'|_I\ngeq x|_I,\ [\ell']=h} z(\ell').
\end{equation}

\subsubsection{\bf Surgery formulas for the counting function $Q_h(\tt)$}
\label{sec:surgform}
Let us fix $I\subset \V$. The set of vertices $\V\setminus I$
determines the connected full subgraphs $\{\Gamma_k\}_k$, $\cup_k\Gamma_k=\Gamma\setminus I$.
For each $k$ we consider the inclusion operator $j_k:L(\Gamma_k)\to L(\Gamma)$,
$E_v(\Gamma_i)\mapsto E_v(\Gamma)$, identifying naturally the corresponding $E$--base elements in the two graphs. This
preserves the intersection forms. Let $j_{k}^*:L'(\Gamma)\to L'(\Gamma_k)$ be its dual, defined by
$j_{k}^*(E^*_{v}(\Gamma))=E^*_{v}(\Gamma_k)$ if $v\in\V(\Gamma_k)$, and $j_{k}^*(E^*_{v}(\Gamma))=0$ otherwise.
Then one has the projection formula
$(j^*_{k}(\ell'), \ell)_{\Gamma_k}=(\ell',j_{k}(\ell))_{\Gamma}$ for any $\ell'\in L'(\Gamma)$ and $\ell\in L(\Gamma_k)$.
This implies that
\begin{equation}\label{eq:proj\ZK}
j^*_{k}(\ZK)=\ZK(\Gamma_k),
\end{equation}
where $\ZK=Z_K(\Gamma) $, respectively $\ZK(\Gamma_k)$, are the (anti)canonical cycles of $\Gamma$ and $\Gamma_k$ respectively.
Note also that $j^*_{k}(E_v(\Gamma))=E_v(\Gamma_k)$ for any $v\in \V(\Gamma_k)$.

Furthermore, one has the following formula regarding the pull--back of the cycles~$s_h$.

\begin{lemma}[\cite{LNN,LSzPoincare}]\label{lem:projs_h}
For any $h\in H$ one has $j^*_{k}(s_{h})=s_{[j^*_{k}(s_{h})]}\in L'(\Gamma_k)$.
\end{lemma}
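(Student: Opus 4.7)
The plan is to prove the two defining properties of $s_{[j^*_{k}(s_h)]}$ for $s':=j^*_k(s_h)$: first, that $s'$ lies in the Lipman cone $\cS'(\Gamma_k)$, and second, that $s'$ is minimal among elements of $\cS'_{h'}(\Gamma_k)$ where $h':=[s']$. The first part is the projection formula; the hard part is the minimality, which I would attack by contradiction, lifting any hypothetical smaller cycle in $\Gamma_k$ to a cycle in $\Gamma$ that contradicts the minimality of $s_h$.

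For the anti-nef property, the projection formula gives, for every $v\in \V(\Gamma_k)$,
\[
(s', E_v(\Gamma_k))_{\Gamma_k} \;=\; (j^*_k(s_h), E_v(\Gamma_k))_{\Gamma_k} \;=\; (s_h, j_k(E_v(\Gamma_k)))_\Gamma \;=\; (s_h, E_v(\Gamma))_\Gamma \;\leq\; 0,
\]
since $s_h\in \cS'(\Gamma)$. Thus $s'\in \cS'_{h'}(\Gamma_k)$ and in particular $s_{h'}\leq s'$.

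For minimality, assume toward a contradiction that there exists $t\in \cS'_{h'}(\Gamma_k)$ with $t\leq s'$ and $t\neq s'$. Since $[s']=[t]=h'$, the difference $\Delta:=s'-t$ lies in $L(\Gamma_k)$, is effective, and is nonzero. Define
\[
\tilde s \;:=\; s_h - j_k(\Delta) \;\in\; L'(\Gamma),
\]
which satisfies $[\tilde s]=[s_h]=h$ because $j_k(\Delta)\in L(\Gamma)$, and $\tilde s \leq s_h$ strictly (on at least one coordinate of $\V(\Gamma_k)$). The key is then to check that $\tilde s\in \cS'(\Gamma)$, which I would verify by splitting $v\in\V$ into three cases: (i) for $v\in \V(\Gamma_{k'})$ with $k'\neq k$, no edge of $\Gamma$ connects $\Gamma_k$ to $\Gamma_{k'}$, hence $(j_k(\Delta),E_v)_\Gamma=0$ and $(\tilde s,E_v)_\Gamma=(s_h,E_v)_\Gamma\leq 0$; (ii) for $v\in I$, the effectivity of $\Delta$ and the non-negativity of $(E_u,E_v)_\Gamma$ for $u\in \V(\Gamma_k)$, $u\neq v$, yield $(j_k(\Delta),E_v)_\Gamma \geq 0$, whence $(\tilde s,E_v)_\Gamma\leq (s_h,E_v)_\Gamma\leq 0$; (iii) for $v\in \V(\Gamma_k)$, the inclusion $\Gamma_k\subset \Gamma$ preserves intersection numbers on $\V(\Gamma_k)$, so $(j_k(\Delta),E_v)_\Gamma=(\Delta,E_v)_{\Gamma_k}=(s',E_v)_{\Gamma_k}-(t,E_v)_{\Gamma_k}$, and combining with $(s_h,E_v)_\Gamma=(s',E_v)_{\Gamma_k}$ gives $(\tilde s,E_v)_\Gamma=(t,E_v)_{\Gamma_k}\leq 0$.

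Therefore $\tilde s\in \cS'_h(\Gamma)$ with $\tilde s<s_h$, contradicting the minimality of $s_h$ in $\cS'_h(\Gamma)$. Hence $s'=s_{h'}=s_{[j^*_k(s_h)]}$, which is the claim. The main obstacle is the bookkeeping in case (ii): one must remember that vertices in $I$ lie outside $\Gamma_k$, so $(E_u,E_v)_\Gamma$ is either zero or a non-negative integer counting transversal intersections, making the correction $(j_k(\Delta),E_v)_\Gamma$ non-negative and the inequality go the correct way.
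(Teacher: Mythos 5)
Your argument is correct. The paper itself gives no proof of Lemma~\ref{lem:projs_h}, only a citation to \cite{LNN,LSzPoincare}, so there is no internal argument to compare against; your proof is the standard one and is complete. Both halves check out: anti-nefness of $j^*_k(s_h)$ follows immediately from the projection formula, and the minimality is correctly reduced, via the lift $\tilde s = s_h - j_k(\Delta)$ with $\Delta = j^*_k(s_h) - t \in L_{\geq 0}(\Gamma_k)$, to the minimality of $s_h$ in $\cS'_h(\Gamma)$. The three-case verification that $\tilde s$ remains anti-nef is sound: for $v$ in another component the correction vanishes, for $v\in I$ it is non-negative (off-diagonal intersection numbers are $\geq 0$ and $\Delta\geq 0$), and for $v\in \V(\Gamma_k)$ the projection formula together with the fact that $j_k$ preserves the intersection form gives $(\tilde s, E_v)_\Gamma=(t,E_v)_{\Gamma_k}\leq 0$. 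Since $\min\{\cS'_{h'}(\Gamma_k)\}\leq j^*_k(s_h)$ by the first half, the contradiction indeed forces equality.
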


For any fixed $h\in H$ one has the following surgery formula for the counting functions:

\begin{thm}[{\cite[Theorem 3.2.2]{LNN},\cite[Corollary~3.8]{kappa2}}]
\label{thm:surgform}
For any $\ell'=\sum_va_vE^*_v$ with $a_v\gg 0$ and with the notation $[\ell']=h$ one has the identity
\begin{equation}\label{eq:surgform}
Q^\Gamma_{h}\,(\ell')=Q^\Gamma_{h,\I}\,(\ell')+\sum_k \
Q^{\Gamma_k}_{[j^*_k(\ell')]}(j^*_k(\ell')).
\end{equation}
Moreover, if $\Gamma $ is rational, then ~\eqref{eq:surgform}
holds on the entire cone~$\ZK+\cS'$, and
 $Q_{[\ZK]}(\ZK)=0$.
\end{thm}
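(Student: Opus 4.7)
The plan is to reduce the surgery identity to a combinatorial decomposition of the truncation region in the definition of $Q^\Gamma_h(\ell')$, and then to localize the resulting residue to the components $\Gamma_k$ via the multiplicative structure of the zeta function. First I would observe the set-theoretic decomposition
$$\{\ell''\,:\,\ell''\not\geq\ell',\,[\ell'']=h\} = \{\ell''\,:\,\ell''|_\I\not\geq\ell'|_\I,\,[\ell'']=h\} \sqcup \{\ell''\,:\,\ell''|_\I\geq\ell'|_\I,\,\ell''\not\geq\ell',\,[\ell'']=h\},$$
which yields
$$Q^\Gamma_h(\ell') - Q^\Gamma_{h,\I}(\ell') = \sum_{\substack{[\ell'']=h,\,\ell''|_\I\geq\ell'|_\I\\ \ell''\not\geq\ell'}} z(\ell'').$$
The $\ell''$'s contributing to the right side fail to dominate $\ell'$ only at coordinates of $\V\setminus\I = \bigsqcup_k \V(\Gamma_k)$, so the task is to identify this residue with $\sum_k Q^{\Gamma_k}_{[j^*_k(\ell')]}(j^*_k(\ell'))$.

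Next I would invoke the surgery theory of the zeta function $f^\Gamma(\tt) = \prod_v (1-\tt^{E^*_v})^{\val_v - 2}$, grouping factors along $\V = \I \sqcup \bigsqcup_k \V(\Gamma_k)$. Using the projection identities $(j^*_k(\ell'),\ell)_{\Gamma_k} = (\ell',j_k(\ell))_\Gamma$ and $j^*_k(E^*_v(\Gamma)) = E^*_v(\Gamma_k)$ for $v\in\V(\Gamma_k)$, one verifies that in the regime $a_v \gg 0$ the residue above decomposes as an additive combination of the subgraph counting functions $Q^{\Gamma_k}_{[j^*_k(\ell')]}(j^*_k(\ell'))$. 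Lemma~\ref{lem:projs_h} is what ensures the classes $[j^*_k(\ell')]$ track correctly under projection. This step is precisely the content of \cite[Theorem~3.2.2]{LNN} and \cite[Corollary~3.8]{kappa2}, which I would import rather than re-derive.

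For the rationality clause, $Q_h$ is quasipolynomial on $\ZK + \cS'$ and becomes a genuine polynomial there when $\Gamma$ is rational (via the identification with Euler characteristics of natural sheaves used in \cite{kappa1,kappa2}). Hence both sides of \eqref{eq:surgform} are polynomials in the coordinates of $\ell'$, and having agreed on the cofinal subset $\{a_v \gg 0\}\cap(\ZK + \cS')$ they agree throughout. Setting $\ell'=\ZK$ identifies the common value with $\chi(\ZK) = -\tfrac12(\ZK,\ZK-\ZK) = 0$. The main obstacle is the decoupling step in the second paragraph: a careful analysis of the support of $Z^\Gamma$ near the truncation boundary is needed to see that, for large $a_v$, contributions from different $\Gamma_k$'s decouple into a plain additive sum rather than a more general Möbius combination — this is the technical core of the cited references.
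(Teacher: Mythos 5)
The paper does not actually prove Theorem~\ref{thm:surgform}: it is imported verbatim from \cite[Theorem 3.2.2]{LNN} and \cite[Corollary 3.8]{kappa2}, so there is no in-paper argument to measure your proposal against. Judged on its own terms, your write-up is a reasonable description of how such a proof starts, but it is not a proof. The opening set-theoretic decomposition is correct and immediate from the definitions \eqref{eq:count1}--\eqref{eq:count2}, giving
$Q^\Gamma_h(\ell')-Q^\Gamma_{h,\I}(\ell')=\sum z(\ell'')$ over $\{\ell''\,:\,[\ell'']=h,\ \ell''|_\I\geq\ell'|_\I,\ \ell''\not\geq\ell'\}$;
but the entire content of the theorem is the identification of this residue with $\sum_k Q^{\Gamma_k}_{[j^*_k(\ell')]}(j^*_k(\ell'))$ for $a_v\gg 0$, and that is precisely the step you declare you would ``import rather than re-derive.'' The proposal therefore reduces the theorem to the same citation the paper uses, which is acceptable as exposition but does not constitute an independent argument.

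Two concrete soft spots in your treatment of the rationality clause. First, the polynomial-continuation argument needs \emph{each} term of \eqref{eq:surgform} to be polynomial on the class-$h$ part of $\ZK+\cS'$: for $Q^\Gamma_h$ and for the $Q^{\Gamma_k}$-terms this follows from the rational case (full subgraphs of rational graphs are rational, and $j^*_k(\ZK+\cS')\subseteq \ZK(\Gamma_k)+\cS'(\Gamma_k)$ by the projection formula $j^*_k(\ZK)=\ZK(\Gamma_k)$ together with $j^*_k(\cS')\subseteq\cS'(\Gamma_k)$), but the analogous polynomiality of the \emph{reduced} counting function $Q_{h,\I}$ on that region is an additional claim you assert without justification --- in \cite{LNN} this is where the work lies. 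Second, ``setting $\ell'=\ZK$ identifies the common value with $\chi(\ZK)=-\tfrac12(\ZK,\ZK-\ZK)=0$'' is not a derivation: that identity is a tautology about the function $\chi$ and says nothing about the counting function. The correct route is via Theorem~\ref{thm:mainkappa}: $Q_{[\ZK+\ell']}(\ZK+\ell')=\chi(-\ell')-\chi(s_{-[\ell']})$ evaluated at $\ell'=0$ gives $\chi(0)-\chi(s_0)=0$, equivalently the vanishing of the ($h$-equivariant) geometric genus of a rational singularity.
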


\subsubsection{\bf The multivariable analytic Poincar\'e series}
Parallel to $Z(\tt)$ there is a series defined using the analytic structure of $(X,0)$ (and a fixed resolution).
Although in this note it will not be really used, for the convenience of the reader we include its definition here
(together with the CDGZ--identity), since this correspondence was a leading motivation in the work of the authors, and
initiated and resulted several of the statements, which are central in this discussion.
(Eg., Theorem \ref{thm:surgform} was motivated by an analytic surgery formula.)

We fix a good resolution $\pi$ of $X$. Consider $c:Y\to X$, the universal abelian covering of $(X,0)$, let
$\tilde Y$ be the normalized pull-back of $\pi$ and $c$, and denote by $\pi_Y$ and $\tilde{c}$
the induced maps by the pull-back completing the following commutative diagram.
\begin{equation}
\label{eq:diagram}
\xymatrix{
\ar @{} [dr] | {\#}
\tilde{Y} \ar[r]^{\tilde{c}} \ar[d]_{\pi_Y} & \tilde{X} \ar[d]^{\pi} \\
Y \ar[r]_{c} & X
}
\end{equation}

We define the following $H$-equivariant $L'$-indexed divisorial filtration of the local ring $\mathcal{O}_{Y,0}$:
for any given $\ell'\in L'$ set
\begin{equation}
\label{eq:Ffiltration}
\mathcal{F}(\ell'):=\{g\in \cO_{Y,0} \mid \div (g\circ \pi_Y)\geq \tilde c^* (\ell')\}.
\end{equation}
It is worth mentioning
that the pull-back $\tilde c^*(\ell')$ is an integral cycle in $\tilde Y$ for any $\ell'\in L'$,
cf.~\cite[Lemma 3.3]{Nem-CLB}. The natural action of $H$ on $(Y,0)$ induces an action on $\cO_{Y,0}$ as follows:
$h\cdot g(y)=g(h\cdot y)$, $g\in \cO_{Y,0}$, $h\in H$. This action decomposes
$\cO_{Y,0}$ as $\oplus_{\lambda\in \hat{H}} (\cO_{Y,0})_{\lambda}$ according to the characters
$\lambda \in \hat{H}:={\rm Hom}(H,\CC^*)$, where
\begin{equation}
\label{eq:Heigenspaces}
(\cO_{Y,0})_{\lambda}:=\{g\in \cO_{Y,0} \mid g(h\cdot y)=\lambda (h)g(y),\ \forall y\in Y, h\in H\}.
\end{equation}
Note that there exists a natural isomorphism $\theta:H\to \hat{H}$ given by
$h\mapsto \exp(2\pi \sqrt{-1} (\ell',\cdot ))\in \Hom(H,\CC^*)$, where $\ell' $ is any element of $L'$ with
$h=[\ell']$.

The subspace $\mathcal{F}(\ell')$ is invariant under this
action and $ \mathcal{F}(\ell')_{\theta(h)}=\mathcal{F}(\ell')\cap (\cO_{Y,0})_{\theta(h)}$. Thus, one can define the \textit{Hilbert function}
 $\mathfrak{h}(\ell')$ for any $\ell'\in L'$
as the dimension of the $\theta([\ell'])$-eigenspace $(\cO_{Y,0}/\mathcal{F}(\ell'))_{\theta([\ell'])}$.
The corresponding multivariable \textit{Hilbert series} is
\begin{equation}
\label{eq:Hilbert}
H(\mathbf{t})=\sum_{\ell'\in L'} \mathfrak{h}(\ell')\mathbf{t}^{\ell'}\in \mathbb{Z}[[L']].
\end{equation}

The \textit{multivariable analytic Poincar\'e series}
$P(\mathbf{t})=\sum_{\ell'\in L'}\mathfrak{p}(\ell')\mathbf{t}^{\ell'}$ can be defined by
\begin{equation}
\label{eq:analPoincare}
P(\mathbf{t})=-H(\mathbf{t})\cdot \prod_{v\in \V}(1-t_v^{-1}).
\end{equation}
\subsubsection{\bf The CDGZ-identity}
\label{sec:pg}
The \textit{CDGZ-identity}, named after Campillo, Delgado and Gusein-Zade thanks to their contributions
in~\cite{CDG-Poincare} and~\cite{CDGZ-PSuniversalAC} relates the Poincar\'e series $P(\tt)$ and $Z(\tt)$ for
rational surface singularities.

\begin{thm}[CDGZ-identity \cite{CDG-Poincare}]\label{thm:CDGZ}
A rational surface singularity $(X,0)$ and any of its resolution $\pi$ satisfy the identity
\begin{equation}\label{CDGZ}
P(\tt)=Z(\tt).
\end{equation}
\end{thm}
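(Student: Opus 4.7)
The plan is to prove the identity as an equality of formal series in $\ZZ[[L']]$, by computing each side explicitly and matching them coefficient by coefficient. Since $P(\tt) = -H(\tt)\cdot\prod_{v\in\V}(1-t_v^{-1})$ and $Z(\tt)$ is the Taylor expansion of $f(\tt) = \prod_v (1-\tt^{E_v^*})^{\val_v - 2}$, the task reduces to identifying $H(\tt)$ with the appropriate rational expression. The whole argument rests on the rationality hypothesis being used twice: once analytically (through vanishing theorems on $\tilde X$) and once structurally (through Laufer's criterion from~\ref{sss:LauferCrit}).

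\textbf{Analytic computation of $\mathfrak{h}(\ell')$.} First, I would decompose the $H$-action on $(\pi_Y)_*\cO_{\tilde Y}$ into eigensheaves, identifying each character eigensheaf with a natural line bundle $\mathcal{L}_h$ on $\tilde X$ indexed by $h\in H$. Under this identification, the divisorial filtration $\mathcal{F}(\ell')$ of~\eqref{eq:Ffiltration} transports to the line bundle $\mathcal{L}_{[\ell']}\otimes\cO_{\tilde X}(-\ell')$, where one uses that $\tilde c^*(\ell')$ is integral (a fact recalled after~\eqref{eq:Ffiltration}). Consequently $\mathfrak{h}(\ell')$ equals the length of the cokernel of the natural map $H^0(\tilde X,\mathcal{L}_{[\ell']}\otimes\cO_{\tilde X}(-\ell'))\hookrightarrow H^0(\tilde X,\mathcal{L}_{[\ell']})$, at least after passing to a suitable local model near the exceptional set. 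The rationality of $(X,0)$, via Grauert--Riemenschneider vanishing combined with the Laufer criterion, then yields $H^1(\tilde X,\mathcal{L}_{[\ell']}\otimes\cO_{\tilde X}(-\ell')) = 0$ whenever $\ell'\in\cS'$, so Riemann--Roch provides an explicit integer-valued quadratic formula for $\mathfrak{h}(\ell')$ on the Lipman cone in terms of $\chi(\ell')$ and correction terms depending on $[\ell']$.

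\textbf{Matching with the topological series.} On the topological side, I would expand each factor $(1-\tt^{E_v^*})^{-1}$ as a geometric series; this converges in the sense of~\eqref{eq:finiteness} because all $E_v^*$ have strictly positive coefficients on each $E_u$ (since $-M^{-1}$ has positive entries, see~\ref{sec:lattice}). The coefficient $z(\ell')$ then admits a signed count of ways to write $\ell' = \sum_v a_v E_v^*$ with $a_v\in\ZZ_{\geq 0}$, weighted by $\prod_v\binom{-(\val_v-2)+a_v-1}{a_v}$. Multiplying the explicit formula for $H(\tt)$ coming from Step~1 by $-\prod_v(1-t_v^{-1})$ produces, by Riemann--Roch cancellation across the $H$-classes, an alternating sum over subsets $I\subset\V$ of contributions $\mathfrak{h}(\ell'-\sum_{v\in I}E_v)$. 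The goal is to recognize this alternating sum, class by class in $H$, as the combinatorial expansion of $f(\tt)$.

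\textbf{Main obstacle.} The principal difficulty lies in the bookkeeping of the $H$-equivariance: both the Riemann--Roch computation of $\mathfrak{h}$ and the expansion of $f(\tt)$ must be split according to $H$ and matched on each eigenspace separately, while neither the filtration nor the product $\prod(1-t_v^{-1})$ respects this decomposition cleanly (as emphasized in~\ref{ss:set}). The key combinatorial identity to be extracted — that an alternating sum of Riemann--Roch contributions over subsets of $\V$ reconstructs the product $\prod_v(1-\tt^{E_v^*})^{\val_v-2}$ — is where the intersection matrix $M$ enters in an essential way, and any inductive attempt (say, peeling off one vertex at a time through a surgery analogous to Theorem~\ref{thm:surgform}) must carefully balance the contribution of the neighboring vertices via the valencies ${\val}_v$.
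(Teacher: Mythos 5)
This theorem is not proved in the paper at all: it is quoted verbatim from Campillo--Delgado--Gusein-Zade \cite{CDG-Poincare} (with the equivariant form coming from \cite{CDGZ-PSuniversalAC} and \cite{Nem-CLB}), so there is no internal proof to compare against. Judged on its own merits, your proposal is a reasonable reconstruction of the \emph{strategy} used in that literature --- eigensheaf decomposition of the universal abelian covering, vanishing of $H^1$ for line bundles with anti-nef Chern class on a rational resolution, Riemann--Roch to make $\mathfrak{h}(\ell')$ combinatorial, and then a term-by-term match with the expansion of $\prod_v(1-\tt^{E_v^*})^{\val_v-2}$ --- but it is an outline, not a proof. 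The decisive step, namely the identity expressing the alternating sum $\sum_{I\subset\V}(-1)^{|I|+1}\mathfrak{h}(\ell'+E_I)$ as the coefficient $z(\ell')$ of $f(\tt)$, is exactly what you label the ``Main obstacle'' and leave unextracted. That identity is the entire content of the theorem; everything before it is setup.

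Two further points would need repair even to complete the outline. First, the vanishing $H^1(\tilde X,\mathcal{L}_{[\ell']}\otimes\cO_{\tilde X}(-\ell'))=0$ is invoked only for $\ell'\in\cS'$, but the coefficient $\mathfrak{p}(\ell')$ of $P(\tt)=-H(\tt)\prod_v(1-t_v^{-1})$ involves $\mathfrak{h}$ at all the shifts $\ell'+E_I$, which need not lie in the Lipman cone; the literature handles this by computing the quotients $\dim\bigl(\mathcal{F}(\ell')/\mathcal{F}(\ell'+E_I)\bigr)_{\theta([\ell'])}$ directly via cohomology on the non-reduced curves $E_I$, not by applying the cone vanishing to each term separately. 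Second, the ingredient of rationality you actually need is not Laufer's computation-sequence criterion of~\ref{sss:LauferCrit} but rather the conjunction of ${\rm Pic}(\tilde X)\cong L'$ (so the eigensheaves are the natural line bundles determined by their Chern classes $-r_h$) and Lipman's vanishing theorem; citing the wrong consequence of rationality suggests the mechanism by which the hypothesis enters has not been pinned down. As it stands the proposal identifies where the difficulty lives but does not resolve it, so it cannot be accepted as a proof; for this paper's purposes the correct move is simply to cite \cite{CDG-Poincare}, as the authors do.
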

The fourth author in~\cite{Nem-PS},~\cite{Nem-CLB} extended the result for larger
families of normal surface singularities with rational homology sphere link.
The largest family for which the CDGZ-identity holds is the family of
splice-quotient singularities with their special resolutions satisfying the end-curve conditions
(see~\cite{Nem-ICM}).

\subsection{Invariants of abstract curve germs}\label{ss:AbCu}
In this section we recall the most relevant local invariants of a curve germ considered in this paper.

\subsubsection{\bf The delta invariant.}
The delta invariant $\delta(C)$ of a reduced curve germ $(C,0)$ is defined as follows.
Let $\gamma:(C,0)^{\widetilde{}}\to(C,0)$
be the normalization, where $(C,0)^{\widetilde{}}$ is the corresponding multigerm. 
Then $\delta(C):=\dim_{{\CC}}\gamma_*\cO_{(C,0)^{\widetilde{}}}/\cO_{(C,0)}$.

Usually we also write $r$ (or $r(C)$) for the number of irreducible components of $(C,0)$.

The delta invariant of a reduced curve can be determined inductively from the delta invariant of
the components and the \emph{Hironaka generalized intersection multiplicity}.
Indeed, assume $(C,0)$ is embedded in some $(\CC^n,0)$, and assume that $(C,0)$ is the union of
two (not necessarily irreducible) germs $(C_1',0)$ and $(C_2',0)$ without common irreducible components.
Assume that $(C_i',0)$ is defined by the ideal $I_i$ in $\cO_{(\CC^n,0)}$ ($i=1,2$).
Then one can define \emph{Hironaka's intersection multiplicity} by
$(C_1',C_2')_{Hir}:=\dim _{\CC}\, (\cO_{(\CC^n,0)}/ (I_1+I_2))$.
Then, one has the following formula of Hironaka, see~\cite{Hironaka} or~\cite[2.1]{StevensThesis} and~\cite{BG80},
\begin{equation}\label{eq:deltaA1}
\delta(C)=\delta(C_1')+\delta(C_2')+(C_1',C_2')_{Hir}.
\end{equation}
In particular, if $(C,0)$ has $r$ irreducible components, then using induction
 (and the obvious inequality $(C_1',C_2')_{Hir}\geq 1$) we get
\begin{equation}\label{eq:deltaA2}
\delta(C)\geq r-1.\end{equation}

\subsubsection{\bf Ordinary $r$--tuples} \label{ex:delta22} \cite{BG80,Greuel,Greuel2,StevensThesis}
If a reduced (abstract) curve germ $(C,0)$ is (analytically equivalent with) the union of the coordinate
axes of $(\CC^r,0)$, then we call $(C,0)$ an {\it ordinary $r$--tuple}. Using e.g.
Hironaka's formula we get that $\delta(C)=r-1$. This shows that the ordinary $r$--tuples realizes
the optimal minimum of the bound~\eqref{eq:deltaA2}. In fact, this property characterizes them:
if for a curve $(C,0)$ with $r$ components one has $\delta(C)= r-1$
then $(C,0)$ is necessarily an ordinary $r$--tuple. Their notation is $R^r_r$.

\subsubsection{\bf $R^{r-1}_r$ type $r$--tuples} \label{ex:delta33} \cite{BG80,Greuel,Greuel2,StevensThesis}
Recall that a curve of type $R^r_r$ consists of $r$ lines in $(\CC^r,0)$ in general position.
We generalize this as follows. For $r\geq 3$, $R^{r-1}_r$ denotes (the isomorphism class of) $r$ lines in
$(\CC^{r-1},0)$ in general position. The plane curve singularity $A_3$ (resp. $A_2$)
with equation $\{x^4+y^2=0\}$ (resp. $\{x^3+y^2=0\}$) will also be denoted by $R^1_2$ and $R^0_1$ respectively.
Then, it turns out that a curve singularity $(C,0)$ with $r$ components is $R^{r-1}_r$ if and only if $\delta(C)=r$.

\subsection{Invariants of embedded curve germs}\label{ss:EmbCu}

We fix a normal surface singularity $(X,0)$ with a $\QHS$-link, and we consider a reduced curve germ $(C,0)\subset (X,0)$.
Next we recall the definition and some properties of the {\it kappa invariant} $\kappa_{X}(C)$,
an embedded invariant of the pair $(C,0)\subset (X,0)$.
The interplay between $\delta(C)$ and $\kappa_{X}(C)$
has been studied in~\cite{kappa1,kappa2}. This connection produced several explicit formulae
for the delta invariant of minimal generic curves embedded into rational surface singularities.
Furthermore, the definition (and several expressions) of $\kappa_{X}(C)$ explains and motivates
several expressions intensively used in the present note.

\subsubsection{\bf The \texorpdfstring{$\kappa$}{kappa}--invariant of a cycle} Fix a good resolution
$\pi :\tX\to X$. For any $\ell'\in L'$ define
\begin{equation}\label{kappa_big}
\kappa_{\tX}(\ell'):=
\mathfrak{h}(Z_K+\ell')=\dim_\CC \Big(\frac{\cO_{Y,0}}{\mathcal{F}(Z_K+\ell')}\Big)_{\theta([Z_K+\ell'])},
\end{equation}
where $(Y,0)$ is the universal abelian covering of $(X,0)$ as described in~\eqref{eq:diagram}.
Note that $\kappa_{\tX}(\l')$ in this way is defined via the analytic filtration $\l'\mapsto {\mathcal F}(\l')$,
or equivalently, in terms of the Hilbert series $H(\tt)$, which can be identified with the counting function of the
analytic Poincar\'e series $P(\tt)$. However, if $(X,0)$ is rational, then by the CDGZ--identity~\ref{thm:CDGZ}
$P(\tt)=Z(\tt)$, hence $\kappa_{\tX}(\l')$ is computable from the counting function of $Z(\tt)$ as
$Q_{[Z_K+\l']}(Z_K+\l')$, cf.~\cite{kappa1,kappa2}. (In general, this expression is called the `topological' $\kappa$--invariant.)

Next, assume that $(C,0)$ is a reduced curve germ on $(X,0)$ (with $\QHS$ link, but not necessarily rational)
and choose a good embedded resolution $\pi: \tX\to X$ of the pair $(C,0)\subset (X,0)$. Then consider the strict transform
$\widetilde{C}$ and the total transform $\pi^*(C)=\ell_C'+\widetilde{C}$ as in~\ref{ss:TOTALTR}. Note that
$\ell'_C\in \mathcal{S}'$. It turns out that the invariant $\kappa_{\tX}(\ell'_C)$ is independent
of the resolution $\pi$ ~(see~\cite[Corollary 3.4]{kappa1}).

This justifies the following definition of the $\kappa$--invariant of a reduced curve germ $(C,0)\subset(X,0)$,
where $(X,0)$ has a $\QHS$ link.

\begin{dfn}
\label{def:kappa}
The $\kappa$--invariant of the pair $(C,0)\subset (X,0)$ is defined as
\begin{equation}
\kappa_X(C)=\kappa_{\tX}(\ell'_C)=\dim_\CC \Big(\frac{\cO_{Y}}{\mathcal{F}(Z_K+\ell'_C)}\Big)_{\theta([\ZK+\ell'_C])}.
\end{equation}
\end{dfn}
One of the main results of~\cite{kappa1} relates $\kappa_X(C)$ and the $\delta$--invariant of $(C,0)$ as follows.

\begin{thm}[{\cite[Theorem 1.1 \& 1.2]{kappa1}}]
\label{thm:mainkappa}
If $(X,0)$ is a rational surface singularity and $(C,0)\subset (X,0)$ a reduced curve germ, then
\begin{equation}
\label{eq:mainkappa}
\kappa_{X}(C)=\delta(C)=\chi(-\ell_C')-\chi(s_{-[\ell'_C]}).
\end{equation}
\end{thm}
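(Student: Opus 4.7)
The plan is to prove the two equalities $\kappa_X(C) = \delta(C)$ and $\kappa_X(C) = \chi(-\ell'_C) - \chi(s_{-[\ell'_C]})$ separately, exploiting respectively the topological and analytic sides of the identity.

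First I would handle the topological formula. Since $(X,0)$ is rational, the CDGZ identity of Theorem~\ref{thm:CDGZ} permits rewriting
\[
\kappa_X(C) = \mathfrak{h}(Z_K + \ell'_C)_{\theta([Z_K + \ell'_C])} = Q_{[Z_K + \ell'_C]}(Z_K + \ell'_C),
\]
reducing the analytic $\kappa$--invariant to the counting function of the topological series $Z(\tt)$. The plan is then to establish the more general identity
\[
Q_{[Z_K + \ell']}(Z_K + \ell') = \chi(-\ell') - \chi(s_{-[\ell']})
\]
for every $\ell' \in \cS'$. For this I would exploit the fact that, by rationality, the surgery formula of Theorem~\ref{thm:surgform} together with the base case $Q_{[\ZK]}(\ZK) = 0$ applies on the entire cone $\ZK + \cS'$, and run the generalized Laufer algorithm \ref{ss:GLA} backwards from $\ell'$ towards $s_{-[\ell']}$. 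Each increment $\ell' \mapsto \ell' + E_v$ produces a shift that can be computed from $\chi(\ell') = -\tfrac{1}{2}(\ell', \ell' - \ZK)$, and the telescoping of these increments matches $\chi(-\ell') - \chi(s_{-[\ell']})$.

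For the analytic equality $\kappa_X(C) = \delta(C)$, I would pull back $C$ to the universal abelian cover $Y$ via the diagram~\eqref{eq:diagram} and match the selected eigenspace in Definition~\ref{def:kappa} with the normalization quotient $\gamma_*\cO_{(C,0)^{\widetilde{}}}/\cO_{(C,0)}$. Concretely, the strategy is to consider on $\tX$ the short exact sequence
\[
0 \to \cO_{\tX}(-\widetilde{C}) \to \cO_{\tX} \to \cO_{\widetilde{C}} \to 0,
\]
twist by the divisor $-\ell'_C$ arising from $\pi^*(C) = \ell'_C + \widetilde{C}$, push forward along $\pi$, and invoke the vanishing $R^1\pi_*\cO_{\tX}(D) = 0$ available for rational singularities when $D$ lies in the appropriate cone (a consequence of Grauert–Riemenschneider combined with Laufer's criterion~\ref{sss:LauferCrit}). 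The equivariant descent to $X$ via the character $\theta([\ZK + \ell'_C])$ then extracts exactly the cokernel computing $\delta(C)$: the $\ZK$--twist plays the role of adjunction (producing the dualizing sheaf on $C$), while the character selects the component supported on the curve rather than on the exceptional divisor.

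The main obstacle I expect is in this third step: keeping track of the $H$--action across the pull–back/push–forward and identifying the correct eigencomponent. The subtlety is that different characters pick up different divisor classes in $L'$, and only the distinguished class $[\ZK + \ell'_C]$ produces the local normalization quotient; other eigenspaces capture contributions from the exceptional set that must be shown to vanish. Rationality is crucial here precisely because it is exactly the hypothesis under which the cohomological error terms disappear without any further bound on $\ell'_C$, whereas in the non--rational setting one recovers only an asymptotic version and the ``minimal generic'' threshold, appearing later in the paper, becomes essential.
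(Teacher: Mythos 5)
First, a remark on the comparison itself: the paper does not prove Theorem~\ref{thm:mainkappa} at all — it is imported verbatim from \cite[Theorems 1.1 \& 1.2]{kappa1} (with the topological reinterpretation coming from \cite{kappa2}) — so there is no in-paper argument to measure your proposal against. Your outline does track the strategy of those references: the analytic identification $\kappa_X(C)=\delta(C)$ via exact sequences, twisting and vanishing is the content of \cite{kappa1}, while the passage through the CDGZ identity to the counting function $Q_{[\ZK+\ell'_C]}(\ZK+\ell'_C)$ is precisely how the present paper uses the result (cf.~\ref{ss:EmbCu}) and how \cite{kappa2} recovers the combinatorial formula. A sanity check of your target identity on the one-vertex cyclic case does confirm $\chi(-aE^*)-\chi((d-a)E^*)=a-1=\delta(C)$, so the statement you are aiming at is the right one.

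That said, both halves of the argument stop exactly where the work begins. For the identity $Q_{[\ZK+\ell']}(\ZK+\ell')=\chi(-\ell')-\chi(s_{-[\ell']})$, the proposed ``telescoping along the generalized Laufer algorithm'' has two problems: (i) $\ell'$ and $s_{-[\ell']}$ lie in different classes of $L'/L$, so no computation sequence of steps $x\mapsto x+E_v$ connects them — the algorithm must be run from $-\ell'$ (or $r_{-[\ell']}$), whose terminal point is indeed $s_{-[\ell']}$; (ii) the increment $Q_h(x+E_v)-Q_h(x)$ is a sum of coefficients of $Z_h(\tt)$ over an entire face of a shifted cone and is not computable from $\chi$ alone — evaluating it requires the surgery formula~\eqref{eq:surgform} applied to the graph with $v$ deleted plus an induction on $|\V|$ (this is exactly the mechanism of the proof in~\ref{sec:proof3}), and you never compute a single increment nor show the telescoped total has the claimed value. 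For the analytic half, $\ell'_C\in L'$ is only a rational cycle, so $\cO_{\tX}(-\ell'_C)$ is undefined on $\tX$; one must pass to the universal abelian cover where $\tilde c^*(\ell'_C)$ is integral, and the vanishing actually used there is the Lipman--Laufer vanishing $h^1(\cO(-\ell'))=0$ for $\ell'\in\cS'$ on rational singularities, not Grauert--Riemenschneider. You rightly flag the equivariant bookkeeping as the crux, but identifying the $\theta([\ZK+\ell'_C])$-eigenspace of the quotient with the normalization quotient computing $\delta(C)$ \emph{is} the theorem, not a step toward it. As written, this is a correct road map to the proofs in \cite{kappa1,kappa2}, not a self-contained proof.
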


In this paper we consider the problem of calculating the delta invariant of minimal generic curves
embedded into rational surface singularities.

\section{Old and new results for cyclic quotient singularities }\label{sec:cyclic}

\subsection{}
Let $(X,0)$ be a cyclic quotient surface singularity with the notation $(X,0):=\frac{1}{d}(1,q)$, cf. \ref{sss:Q}.
The dual graph $\Gamma$ of the minimal resolution of $X$ is the following string of vertices (see eg. \cite{BPV-book})

\begin{center}
\begin{tikzpicture}
\coordinate (A) at (-3,0);
\draw[fill] (A) circle (0.09);
\coordinate (B) at (-2,0);
\draw[fill] (B) circle (0.09);
\coordinate (C) at (-1,0);
\draw[fill] (C) circle (0.09);
\path [-, thick] (A) edge (B);
\path [-, thick] (B) edge (C);
\coordinate (D) at (-0.5,0);
\path [-, thick] (C) edge (D);
\coordinate (E) at (0.3,0);
\draw [dotted, thick] (-0.3,0)--(0.1,0);
\coordinate (F) at (0.8,0);
\path [-, thick] (E) edge (F);
\draw[fill] (F) circle (0.09);
\node[above=4] at (A) {$-k_1$};
\node[above=4] at (B) {$-k_2$};
\node[above=4] at (C) {$-k_3$};
\node[above=4] at (F) {$-k_s$};
\node[below=4] at (A) {\small $E_1$};
\node[below=4] at (B) {\small $E_2$};
\node[below=4] at (C) {\small $E_3$};
\node[below=4] at (F) {\small $E_s$};
\end{tikzpicture}
\end{center}
where the numerical data $\{k_i\}_{i=1}^s$ ($k_i\geq 2$) is encoded by the Hirzebruch--Jung (negative) continued fraction expansion
$$\frac{d}{q}=k_1-\frac{1}{k_2-\frac{1}{k_3-\dots}}:=[k_1,\dots,k_s].$$
We also set $0<q'<d$ so that $qq'\equiv 1$ (mod $d$). Moreover, for any $1 \leq i \leq j \leq s$ we write the continued fraction
$[k_i ,\dots, k_j]$ as a rational
number $d_{ij}/q_{ij}$ with $d_{ij}>0$ and $\mathrm{gcd}(d_{ij},q_{ij})=1$. Notice that $d_{ij}$ is the determinant
of the intersection matrix associated with the substring with vertices $v_i,\dots,v_j$. Then, $d_{1,s}=d$, $d_{2,s}=q$ and $d_{1,s-1}=q'$. We also set $d_{i,i-1}:=1$ and
$d_{ij}:=0$ for $j< i-1$.

Below we use the notation $\lfloor - \rfloor$
for the integral part, $\{-\}$ for the fractional part, and $\lceil -\rceil$ for the ceiling function.

We choose the class $[E^*_s]$ as a generator of $H=\mathbb{Z}_d$.
For any class $a[E^*_s]$ with $0\leq a< d$ we consider the minimal cycle $s_{a[E^*_s]}$ and we write
$s_{a[E^*_s]}=\sum_{i=1}^s a_i E^*_i$.
Then by~\cite[section 10.3]{NemOSZ}, one has
\begin{equation}\label{eq:afromai}
a=d_{2,s}\cdot a_1+d_{3,s}\cdot a_2+\ldots+d_{s,s}\cdot a_{s-1}+a_s,
\end{equation}
and the coefficients $a_i$ are expressed from the integer $a$ via the following recursive identity
\begin{equation}\label{eq:cyclicRid}
 a_i=\Big\lfloor \frac{a-\sum_{j=1}^{i-1} d_{j+1,s}\cdot a_j}{d_{i+1,s}} \Big\rfloor \ \ (1\leq i\leq s);
\end{equation}
see also \cite[Section 2.2]{CM}. In particular, $a_1=\lfloor a/q \rfloor$.

For each fixed $h=[aE^*_s]$ the entries $\{a_i\}_{i=1}^s$ identify the minimal generic $h$--curve.

\subsubsection {\bf Example.} Assume that $n/q=15/11=[2,2,3,2,2]$. Then the possible
entries $(a_1,\ldots, a_5)$ representing some $s_{a[E^*_s]}$ are the following $5$--tuples
$$(1,0,1,0,0), \ (1,0,0,1,0), \ (1,0,0,0,1), \ (1,0,0,0,0), \ (0,1,1,0,0), $$
$$(0,1,0,1,0), \ (0,1, 0,0,1), \ (0,1,0,0,0), \ (0,0,2,0,0), \ (0,0,1,1,0), $$
$$(0,0, 1,0,1), \ (0,0,1,0,0),\ (0,0,0,1,0), \ (0,0,0,0,1), \ (0,0,0,0,0). $$

\subsection{}\label{sec:otherproofs} We consider a minimal generic curve $(C,0) $ on $(X,0)$ whose associated cycle is $\ell'_C=\sum_{i=1}^s a_iE^*_i=s_h$ for some $h=
a[E^*_s]$, $0<a<s$.
If we denote by $r(C)$ the number of irreducible components of $C$, then we get $r(C)=\sum_i a_i$.

In this case one has the following simple formula for the delta invariant of $(C,0) $ in terms of the number of its branches.

\begin{thm}[\cite{CM}]\label{thm:cyclic}
 For a minimal generic curve $C$ on a cyclic quotient singularity $(X,0)$ one has $\delta(C)=r(C)-1$. In particular, $(C,0)$
is an ordinary $r(C)$--tuple. (Cf. \ref{ex:delta22}.)
\end{thm}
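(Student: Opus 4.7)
Since cyclic quotient singularities are rational, Theorem \ref{thm:mainkappa} applies and yields
$$\delta(C) = \chi(-\ell'_C) - \chi(s_{-h}), \qquad h = [\ell'_C] = a[E^*_s],$$
where $\ell'_C = s_h = \sum_{i=1}^s a_i E^*_i$ is determined by the recursion \eqref{eq:cyclicRid}. By the characterization in \ref{ex:delta22}, it suffices to establish the numerical identity $\delta(C) = \sum_i a_i - 1$; once this is done, the classification automatically forces $(C,0)$ to be an ordinary $r(C)$-tuple.

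Expanding the definition $\chi(\ell') = -\tfrac{1}{2}(\ell', \ell' - Z_K)$ yields $\chi(-\ell') - \chi(\ell') = -(\ell', Z_K)$, so the target rewrites as
$$\chi(\ell'_C) - \chi(s_{-h}) - (\ell'_C, Z_K) = \sum_i a_i - 1.$$
The plan is to verify this by explicit string combinatorics. First, I would obtain the coefficients $\{b_i\}$ of $s_{-h}$ by running \eqref{eq:cyclicRid} on $d-a$ in place of $a$. Second, I would exploit \eqref{eq:ZK}: on a string only the two endpoints have valency $1$, so $Z_K = E - E^*_1 - E^*_s$, which gives a clean expansion of $(\ell'_C, Z_K)$; indeed \eqref{eq:afromai} immediately delivers $(\ell'_C, E^*_1) = -a/d$ and an analogous formula for $(\ell'_C, E^*_s)$. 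Third, using the explicit entries $(E^*_i, E^*_j) = -d_{1,\min(i,j)-1}\, d_{\max(i,j)+1,s}/d$ of $-M^{-1}$ on the chain, the quantities $\chi(\ell'_C)$ and $\chi(s_{-h})$ reduce to bilinear sums in $\{a_i\}$ and $\{b_i\}$ weighted by the continued-fraction data $\{d_{ij}\}$.

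The main obstacle is this final combinatorial identity: it couples the two sequences $\{a_i\}$ and $\{b_i\}$ in a non-trivial way. Its most natural proof rests on the reciprocity $d/q = [k_1,\ldots,k_s]$ versus $d/q' = [k_s,\ldots,k_1]$, since reversing the chain exchanges the roles of $h$ and $-h$ and should make the two bilinear sums telescope against each other. A conceptually cleaner alternative is the surgery formula of Theorem \ref{thm:surgform} applied with $I = \{v_s\}$: removing an endpoint reduces the string to a shorter one, corresponding to a cyclic quotient of order $d_{1,s-1}$, and Lemma \ref{lem:projs_h} controls the restriction of $s_h$, which sets up a clean induction on the chain length $s$.
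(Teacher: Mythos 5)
Your reduction is sound as far as it goes: cyclic quotients are rational, so Theorem~\ref{thm:mainkappa} gives $\delta(C)=\chi(-s_h)-\chi(s_{-h})$, the identity $\chi(-\ell')-\chi(\ell')=-(\ell',Z_K)$ is correct, $Z_K=E-E_1^*-E_s^*$ on a string, and $(s_h,E_1^*)=-a/d$ follows from \eqref{eq:afromai}. This is exactly the opening of the paper's second proof (\S\ref{sec:proof2}). But you then stop at what you yourself call ``the main obstacle'': the identity $\chi(s_h)-\chi(s_{-h})-(s_h,Z_K)=\sum_i a_i-1$ is never established, and the route you propose for it --- expanding $\chi(s_h)$ and $\chi(s_{-h})$ as bilinear forms in the coefficient vectors $\{a_i\}$, $\{b_i\}$ against the entries of $-M^{-1}$ and hoping the two sums telescope via the reversal $d/q\leftrightarrow d/q'$ --- is not carried out and is genuinely harder than you suggest, because $\{b_i\}$ is produced by running the floor recursion \eqref{eq:cyclicRid} on $d-a$ and does not relate to $\{a_i\}$ term by term. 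The paper's device is to avoid the bilinear expansion entirely: by \cite[10.5.1]{NemOSZ} one has the closed form $\chi(s_{[aE^*_s]})=\frac{a(1-d)}{2d}+\sum_{i=1}^{a}\{iq'/d\}$, a one-dimensional arithmetic sum indexed by $a$ alone, and then the substitution $i\mapsto d-i$ in the fractional parts collapses $\chi(s_h)-\chi(s_{-h})$ to $\frac{a}{d}-1+\{\frac{aq'}{d}\}$, which cancels against $(s_h,Z_K)=\frac{a}{d}+\{\frac{aq'}{d}\}-r(C)$. Without this input (or an equivalent), your argument has a hole exactly where the content of the theorem lies.

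Your alternative suggestion --- surgery at an end vertex plus Lemma~\ref{lem:projs_h} and induction on $s$ --- is the skeleton of the paper's third proof (\S\ref{sec:proof3}), so the strategy is viable, but again the decisive step is missing: one must actually evaluate the local term $Q_{[\ZK+s_h],v_1}(\ZK+s_h)$ as an explicit lattice-point count. The paper shows this count equals $\lfloor a/q\rfloor=a_1$ when $q\nmid a$ and $\lfloor a/q\rfloor-1$ when $q\mid a$, and the $\pm 1$ discrepancy is exactly what compensates for the restricted curve $C_2$ being empty in the latter case; one also needs the one-vertex base case $Z(t)=\sum_{x\ge 0}(x+1)t^{x/d}$. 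Neither computation appears in your sketch, so as written the proposal is a correct plan with the two possible proofs of its central identity both left open.
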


In the rest of the section, we summarize the existing proofs for the formula of Theorem~\ref{thm:cyclic}.
\vspace{0.3cm}

\subsubsection{} \label{sec:proof1}
The {\bf first proof} was developed by the first and third authors in~\cite{CM} using the methods of weighted
blow-ups and an analytic type surgery formula for $\kappa_X(C)$. The authors used explicit equations for the
irreducible minimal generic curves as $x^{q_i}-y^{\bar q_i}$, where $q_i$ is a remainder appearing in the Hirzebruch-Jung
decomposition of $\frac{d}{q}$ and $\bar q_iq=q_i \mod d$. The key of the proof is to show that $\kappa_X(C)$ can
be obtained recursively as $\kappa_X(C)=\kappa_\pi + \kappa_{X'}(C')$ where $\pi':X'\to X$ is a weighted blow-up
and $\kappa_\pi$ is either $k_i$ or $k_i-1$, where $k_i$ is the number of transversal cuts on the exceptional divisor,
depending on whether or not $\pi$ is the last blow-up in the resolution. The invariant $\kappa_\pi$ is obtained
as the number of integer points in a triangle
\begin{equation}
\label{eq:kappa-pi}
\kappa_{\pi}:= \# \{ (i,j) \in \mathbb{Z}^2 \mid i,j \geq 1, \, k \geq i +qj \equiv k \! \mod d \}=
\begin{cases}
k_i-1 & \text{ if } k=k_iq\\
k_i & \text{ otherwise}.
\end{cases}
\end{equation}

\subsubsection{}\label{sec:proof2}
The {\bf second proof} follows from one of the main result of~\cite{kappa1} and~\cite{kappa2} saying that
for rational singularities $(X,0)$ and for $(C,0)$ as above one has $\delta(C)=\chi(-s_h)-\chi(s_{-h})$.
The point is that when $(X,0) $ is cyclic quotient, the term $Exp:=\chi(-s_h)-\chi(s_{-h})$ can be calculated explicitly.
We follow~\cite[Example 5.9]{kappa1}. Indeed, we may write
\begin{equation}\label{eq:Exp1}
Exp=\chi(s_h)-\chi(s_{-h})-(s_h,\ZK)
\end{equation}
since $\chi(-s_h)=\chi(s_h)-(s_h,\ZK)$.
Then one can calculate
$$\chi(s_h)=\chi(s_{[aE^*_s]})=\frac{a(1-d)}{2d} + \sum_{i=1}^a\, \left\{ \frac{iq'}{d}\right\}$$
using ~\cite[10.5.1]{NemOSZ}.
Since $-h=[(d-a)E^*_s]$, $\chi(s_{-h})$ equals
\begin{equation}\label{eq:diff}
\begin{aligned}
 \chi(s_{-h}) &=
\frac{(d-a)(1-d)}{2d} + \sum_{j=1}^{d-a}\, \left\{ \frac{jq'}{d}\right\}\\
& = \frac{(d-a)(1-d)}{2d} + \sum_{i=1}^{d-1}\,\left( 1- \left\{ \frac{iq'}{d}\right\}\right)
 -\sum_{i=1}^{a-1}\,\left( 1- \left\{ \frac{iq'}{d}\right\}\right),
\end{aligned}
\end{equation}
which implies
$$\chi(s_h)-\chi(s_{-h})=\frac{a}{d}-1 -\left\{\frac{aq'}{d}\right\}.$$
On the other hand $\ZK=E-E_1^*-E_s^*$ by~\eqref{eq:ZK}, hence we get $(s_h,\ZK)=(s_h,E)-(s_h,E_1^*+E_s^*)$
where the first term is easily identified with $(s_h,E)=-r(C)$.
For calculating the second term first we write $-(E_1^*, E_i^*)=d_{i+1,s}/d$ ($1\leq i\leq s$) following \cite{NemOSZ}.
Then $(s_h,E_1^*)=-(\sum_i d_{i+1,s}\cdot a_i)/d=-a/d$ follows from~\eqref{eq:afromai}.
In order to compute $(s_h,E_s^*)$ we create formally the symmetric situation.
Since $[E^*_1]=[qE^*_s]$ and $[E^*_s]=[q'E^*_1]$,
$h=[aE^*_s]=[aq'E_1^*]$. Write $a' \equiv aq'$ (mod $d$), $0<a'< d$. Then for $h=[a'E_1^*]$ and
$s_h=s_{[a'E_1^*]}=\sum_i a_iE^*_i$ the symmetric formula is
\begin{equation*}(s_h,E_s^*)=-\frac{a'}{d}=\left\lfloor
 \frac{aq'}{d}\right\rfloor - \frac{aq'}{d}.\end{equation*}
Thus, by above calculation we get $(s_h,\ZK)=a/d-\lfloor aq'/d\rfloor+ aq'/d-r(C)$.
Finally, using also~\eqref{eq:Exp1} and~\eqref{eq:diff} we deduce~$Exp=r(C)-1$.

\subsubsection{}\label{sec:proof3}
The {\bf third proof}
seems to reinterpret~\ref{sec:proof1} in the language of Poincar\'e series.
It is based on the surgery formula~\eqref{eq:surgform}.
It is more in the spirit of the topological approach developed in~\cite{kappa2}. Here we use surgery
formula techniques for the counting function of the coefficients of the topological Poincar\'e series,
which will also serve as a base for further generalizations.
\vspace{0.3cm}

We proceed by induction on the number of vertices $|\V|$ of $\Gamma$. First we assume that $\Gamma$ contains
only one vertex with self-intersection $-d$. Then $\ZK=(1-2/d)E$ and for any $0< a< d$ the cycle of the generic
curve is $\ell'_C=s_{[aE^*]}=aE^*=(a/d)E$ (cf.~\eqref{eq:cyclicRid}). The topological Poincar\'e series associated
with $\Gamma$ is expressed as $Z(t)=\sum_{x\geq 0} (x+1) t^{x/d}$. By Theorem~\ref{thm:mainkappa} we know that
$\delta(C)=Q_{[\ZK+\l'_C]}\,(\ZK+\l'_C)$. In order to calculate it, we
need to sum all the coefficients $(x+1)$ corresponding to $x\geq 0$, for which
$x< d-2+a$ and $x\equiv d-2+a \ (\mathrm{mod} \ d)$. Since there exists only one such $x$, namely $x=a-2$, we get
that $\delta(C)=a-1$, which proves the statement for~$|\V|=1$.

Now assume $|\V|>1$. Let $v_1$ be the first vertex of the string and we denote by $\Gamma_2$ the substring which
we get from $\Gamma$ by deleting $v_1$ and its adjacent edge. Denote the inclusion operator by
$j_2:L(\Gamma_2)\to L(\Gamma)$ and consider its dual operator $j^*_2$ as in section~\ref{sec:surgform}.
Then we apply the surgery formula in Theorem~\ref{thm:surgform} for $v_1$ and the cycle $\ZK+\ell'_C\in \ZK+\mathcal{S}'$
in the rational surface context. Thus, we have
\begin{equation}\label{eq:surgcyc}
Q_{[\ZK+\ell'_C]}\,(\ZK+\ell'_C)=Q_{[\ZK+\ell'_C],v_1}\,(\ZK+\ell'_C)+
Q^{\Gamma_2}_{[j^*_2(\ZK+\ell'_C)]}(j^*_2(\ZK+\ell'_C)).
\end{equation}
First of all notice that by~\eqref{eq:projZ_K} one has $j^*_2(\ZK)=Z_{K}(\Gamma_2)$ where $Z_{K}(\Gamma_2)$ is the anti-canonical
cycle associated with $\Gamma_2$. We set the notation $\ell'_{C,2}:=j^*_2(\ell'_C)=\sum_{i=2}^s a_i E^*_i$ for the
pull--back of the cycle $\ell'_C=\sum_{i=1}^s a_i E^*_i$.

Since $C$ is a minimal generic curve one has $\ell'_C=s_h$ for $h=[\ell'_C]\in H$, hence by Lemma~\ref{lem:projs_h} we obtain
that $\ell'_{C,2}$ is the cycle of a minimal generic curve $C_2$ on the cyclic quotient singularity whose minimal resolution
graph is $\Gamma_2$.
Note that $\ell'_{C,2}=0$ exactly when $\sum_{i=2}^sa_i=0$, in this case the corresponding restricted curve is empty.
From~\eqref{eq:afromai} this happens exactly when $q|a$.
Thus, by the inductive step, the second term of the right-hand side of~\eqref{eq:surgcyc} is
\begin{equation}\label{eq:INDIND}Q^{\Gamma_2}_{[Z_{K,2}+\ell'_{C,2}]}(Z_{K,2}+\ell'_{C,2})=\delta(C_2)=\left\{
\begin{array}{ll} \textstyle{-1+\sum}_{i=2}^s a_i & \mbox{if $q\nmid a$} \\
0 & \mbox{otherwise}.\end{array}\right. \end{equation}
It remains to calculate $Q_{[\ZK+s_h],v_1}\,(\ZK+s_h)$ from the topological Poincar\'e series $Z(\tt)$ of $\Gamma$.
$Z(\tt)$, by definition, can be written in the following form
$$Z(\mathbf{t})=T\Big[\frac{1}{(1-\mathbf{t}^{E^*_1})(1-\mathbf{t}^{E^*_s})}\Big]=
\sum_{x,y\geq 0}\mathbf{t}^{x E^*_1+y E^*_s}.$$
Then we have to count all the cycles $\ell'=x E^*_1+y E^*_s$ satisfying the following properties:
\begin{align*}
 (i) \ \ \ & \ell'|_{E_1}<(\ZK+s_h)|_{E_1}\\
 (ii) \ \ \ & [\ell']=[\ZK+s_h].
\end{align*}

Recall that $h=[aE^*_s]$ for some $0<a<d$. Then one has $s_h|_{E_1}=a/d$. Similarly $[\ZK]=[(d-q-1)E^*_s]$,
hence $\ZK|_{E_1}=(d-q-1)/d$. Moreover, since $E^*_1|_{E_1}=q/d$ and $E^*_s|_{E_1}=1/d$ the above counting problem
with conditions ({\it i}) and ({\it ii}) can be transformed into the following lattice points counting:
we count pairs of integers $x', y'\geq 1$ satisfying
$$(i)\ \ x'q+y'< d+a\ \ \ \mbox{and} \ \ \ (ii) \ \ x'q+y'\equiv a \ (\mathrm{mod} \ d).$$
This means that $x'q+y'=a$ with $x'\geq 1$ and $y'\geq 1$.
 The number of such pairs
$(x',y')$ is exactly $\lfloor a/q\rfloor$ if $q\nmid a$, and it is $\lfloor a/q\rfloor-1 $ if $q| a$.
Finally note that from ~\eqref{eq:cyclicRid} $\lfloor a/q\rfloor=a_1$. Hence these facts combined
with~\eqref{eq:INDIND} ends the proof.

The connection between the counting function $Q_{[\ZK]+h,v}$ and the invariant $\kappa_\pi$ from the first
proof~\eqref{eq:kappa-pi} remains to be better understood.

\subsubsection{}\label{sec:proof4}
The {\bf fourth proof}, again {\bf new}, appears in~\cite{NEW}.
Originally we planned to insert it in~\cite{kappa1}, or as an Appendix of the present manuscript,
but during the reduction it evolved into an independent manuscript.
It considers the canonical (minimal) embedding of the cyclic quotient singularity into $\CC^e$ via its invariant monomials,
and then one studies the tangent application of the parametrized curves $\widetilde {C}\to \CC^e$.
One of the advantages of this proof is that it can be naturally generalized for more general
classes of singularities $(X,0)$, and
provides a direct method to compute the delta invariant of $(C,0)$ even in those cases when
the statement of Theorem \ref{thm:cyclic} is not true.

\begin{rem}
The proofs in \S\ref{sec:proof1} and \S\ref{sec:proof3} end up with a certain lattice point counting.
In fact, in proof in \S\ref{sec:proof2} this aspect appears subtly in the arithmetical computation as well:
behind these arithmetical formulae one can recognize either lattice point counting, or the corresponding
Dedekind sum interpretations. The third proof makes connection with surgery formulae as well via
explicit calculations of Ehrhart type quasi-polynomials. The proof \S\ref{sec:proof4} is completely different,
it is more algebraic.
\end{rem}

\section{Delta invariant of minimal generic curves on rational singularities} 
\label{sec:delta-invariant}

\subsection{The surgery formula}\label{sec:surgformtopkappa}
Let $\Gamma$ be the minimal good dual resolution graph of a rational singularity $(X,0)$.
Consider a minimal generic curve $(C,0)$ on $(X,0)$ with its associated cycle $s_h$ for some $h\in H$, $h\not=0$.

Let $\mathcal{N}$ denote the set of nodes of $\Gamma$, i.e. vertices with valency $\kappa_v\geq 3$.
Assume that $\mathcal{N}\neq \emptyset$ (the $\mathcal{N}= \emptyset$ case has been already discussed in
section~\ref{sec:cyclic}). Then the other vertices $\V\setminus \mathcal{N}$ determine the collection of connected strings
$\{\Gamma_k\}_k$, which are minimal resolution graphs of cyclic quotient singularities. We also consider for
any $\Gamma_k$ the inclusion operator $j_k$ and its dual $j^*_k$ considered in section~\ref{sec:surgform}.

Fix $h\in H$, $h\not=0$, and write
$s_h=\sum_{v\in\V} a_v E^*_v=\sum_{v\in\mathcal{N}}a_v E^*_v+\sum_k \sum_{v\in\V(\Gamma_k)}a_{v,k} E^*_{v,k}$
with $a_v\in\mathbb{Z}_{\geq 0}$. Then for every $k$ we have the cycles
$s_{h,k}:=j^*_k(s_h)=\sum_{v\in\V(\Gamma_k)}a_{v,k} E^*_{v,k}$ on the minimal resolution of the corresponding
cyclic quotient singularities $(X_k,0)$. Each $s_{h,k}$ is also an irreducible minimal cycle 
$s_{h,k}=s_{[j^*_k(s_h)]}$ on $\Gamma_k$
by Lemma~\ref{lem:projs_h}. It corresponds to the curve $(C_k,0)$, the union of those components of
$(C,0)$, which intersect $E(\Gamma_k)=\cup_{v\in \V(\Gamma_k)}E_v$. The curve $(C_k,0)$ is nonempty if and only if
$r_k(C):=\sum_{v\in\V(\Gamma_k)}a_{v,k} >0$. In such a case $r_k(C)$ is the
 the number of branches of $(C_k,0)$. Hence, whenever $s_{h,k}\neq 0$ (i.e. $C_k$ is nonempty),
by Theorem \ref{thm:cyclic} one obtains that
$\delta(C_k)= \sum_{v\in\V(\Gamma_k)}a_{v,k} - 1$.

Set also $r(C):=\sum_v a_v= \sum_k r_k(C)+ \sum _{v\in \mathcal{N}}a_v$ for the number of components of~$C$.

We apply the surgery formula from Theorem~\ref{thm:surgform} in the rational singularity context for the set
of nodes $\mathcal{N}$ and the cycle $\ZK+s_h\in \ZK+\mathcal{S}'$ in order to deduce the following identity
\begin{equation}\label{eq:surggen}
Q_{[\ZK+s_h]}\,(\ZK+s_h)=Q_{[\ZK+s_h],\mathcal{N}}\,(\ZK+s_h)+
Q^{\Gamma_k}_{[j^*_k(\ZK+s_h)]}(j^*_k(\ZK+s_h)).
\end{equation}
Then the above discussion implies the following surgery type formula for the delta invariant.
\begin{prop}\label{thm:deltasurg}
$\delta(C)=Q_{[\ZK+s_h],\mathcal{N}}\,(\ZK+s_h)+\sum_{k:r_k(C)>0}(r_k(C)-1)$.
\end{prop}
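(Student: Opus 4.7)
The plan is to combine three ingredients assembled in the preliminaries and in the preceding discussion: the topological expression of the delta invariant from Theorem~\ref{thm:mainkappa}, the surgery formula~\eqref{eq:surggen} already written down above, and the cyclic quotient identity of Theorem~\ref{thm:cyclic}. The proposition will then follow by matching the summands on the right-hand side of~\eqref{eq:surggen} with delta invariants of the restricted curves on the bordering strings.

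First I would note that since $(X,0)$ is rational and $\ell'_C=s_h$, Theorem~\ref{thm:mainkappa} gives
$$\delta(C)=\kappa_X(C)=Q_{[\ZK+s_h]}(\ZK+s_h).$$
Since $s_h\in\cS'$, the cycle $\ZK+s_h$ lies in $\ZK+\cS'$, so the rational version of Theorem~\ref{thm:surgform} applies with $I=\mathcal{N}$ and yields exactly~\eqref{eq:surggen}. Hence it suffices to evaluate each string contribution $Q^{\Gamma_k}_{[j^*_k(\ZK+s_h)]}(j^*_k(\ZK+s_h))$.

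Next I would identify each string term. By~\eqref{eq:projZ_K} one has $j^*_k(\ZK)=\ZK(\Gamma_k)$, and by Lemma~\ref{lem:projs_h} the cycle $s_{h,k}=j^*_k(s_h)$ is itself a minimal element of its $H$--class in $L'(\Gamma_k)$. Consequently $s_{h,k}$ is the cycle of a minimal generic curve $(C_k,0)$ on the cyclic quotient singularity $(X_k,0)$ whose minimal resolution graph is $\Gamma_k$, and a second application of Theorem~\ref{thm:mainkappa}, now to $(X_k,0)$, identifies the $k$--th term with $\delta(C_k)$. When $r_k(C)=0$ the cycle $s_{h,k}$ vanishes, and the $k$--th term collapses to $Q^{\Gamma_k}_{[\ZK(\Gamma_k)]}(\ZK(\Gamma_k))=0$ by the final assertion of Theorem~\ref{thm:surgform}. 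When $r_k(C)>0$, Theorem~\ref{thm:cyclic} gives $\delta(C_k)=r_k(C)-1$. Substituting these values into~\eqref{eq:surggen} produces the claimed identity.

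The only delicate point, and the one I would be most careful about, is the bookkeeping of $H$--classes under the projections $j^*_k$: one must check both that $j^*_k(s_h)$ remains a minimal cycle in its own class on the smaller graph (so that the restricted curve is genuinely minimal generic on $(X_k,0)$) and that the class of $\ZK+s_h$ restricts compatibly. Both facts are packaged precisely in Lemma~\ref{lem:projs_h} and~\eqref{eq:projZ_K}, so once these two results are invoked the argument reduces to a clean substitution; there is no substantial obstacle beyond orchestrating the three cited theorems.
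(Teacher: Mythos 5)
Your argument is correct and is essentially the paper's own proof: the paper likewise combines $\delta(C)=Q_{[\ZK+s_h]}(\ZK+s_h)$ (from Theorem~\ref{thm:mainkappa} via the CDGZ identity), the surgery formula~\eqref{eq:surggen} applied at $I=\mathcal{N}$ on the cone $\ZK+\cS'$, Lemma~\ref{lem:projs_h} together with $j^*_k(\ZK)=\ZK(\Gamma_k)$ to recognize each string term as $\delta(C_k)$ for a minimal generic curve on a cyclic quotient, and Theorem~\ref{thm:cyclic} to evaluate $\delta(C_k)=r_k(C)-1$. Your explicit treatment of the empty strings via $Q_{[\ZK]}(\ZK)=0$ is a detail the paper leaves implicit, but it changes nothing.
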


\subsection{The topology of star-shaped graphs}\label{ss:TOPSTAR}

\subsubsection{}
In this section we focus on the topology of singularities $(X,0)$ whose minimal good resolution
graph $\Gamma$ is star-shaped. Let $v_0$ be the central vertex, i.e. the only node of $\Gamma$.

In this section will not assume that the graph is rational.

Note that the link of $(X,0)$ in this case is a Seifert rational homology sphere whose Seifert structure is determined
by its normalized Seifert invariants. Our aim is to express arithmetically the value $Q_{[\ZK+s_h],v_0}(\ZK+s_h)$ of the
one-variable counting function $Q_{[\ZK+s_h],v_0}(t)$ in terms of the Seifert invariants.

\subsubsection{\bf Seifert invariants, lifts and equivariant decomposition of $Z(\mathbf{t})$ (cf. \cite[Section 6]{LNehrhart})}\label{sec:SfZ}
Assume that the star-shaped graph $\Gamma$ has $\nu$ legs. Each leg is a string with normalized Seifert
invariants $(d_i,q_i)$ where $0<q_i<d_i$ and $\gcd(d_i,q_i)=1$, $i=1,\dots,\nu$. That is, if the $i^{th}$ leg
has $s_i$ vertices, say $v_{i1},\dots,v_{is_i}$ with self-intersection numbers $-k_{i1},\dots,-k_{is_i}$ such
that $v_{i1}$ is connected by the central vertex, then they are expressed by the Hirzebruch--Jung continued
fraction $d_i/q_i=[k_{i1},\dots,k_{is_i}]$ ($k_{ij}\geq 2$), see section~\ref{sec:cyclic}. The corresponding
base elements in $L$ will be denoted by $\{E_{ij}\}_{j=1}^{s_i}$. The central vertex $v_0$ with decoration
$-k$ defines the higher-valency element $E_0$ in~$L$.

We use the notation from section~\ref{sec:cyclic}, for example we will write $d^i_{j_1j_2}$ for the determinant
of the subchain of the $i^{th}$ leg connecting the vertices $v_{ij_1}$ and $v_{ij_2}$.

The plumbed 3-manifold associated with such a star-shaped graph $\Gamma$, or, equivalently, the link of $(X,0)$,
has a Seifert structure and the collection of normalized Seifert invariants is denoted by
$Sf=(-k; (d_i,q_i)_{i=1}^{\nu})$. The orbifold Euler number is defined as
$$e=-k+\sum_i q_i/d_i.$$
The negative definiteness of the intersection form is equivalent to~$e<0$.

Fix some $h\in H$. We say that $\ell\in L'$ is a {\it lift} of $h$ if $[\ell_h']=h$; $\ell'_h$
is called a {\it reduced lift} of $h$ if $[\ell_h']=h$ and $\ell'_h$ has the form $\sum _{i=0}^{\nu} c_iE^*_{i}$,
where we abbreviate $E_i:=E_{is_i}$ for simplicity. Note that in general any coefficient of a lift
might depend on the choice of the lift.

Next we list some useful identities regarding the intersection of the dual base elements:
$(E^*_0,E^*_0)=1/e$, $(E^*_0,E^*_i)=1/d_ie$, $(E^*_{ij},E^*_0)=(d^i_{j+1,s_i}E^*_{is_i},E^*_0)$, and
$[E^*_{ij}]=[d^i_{j+1,s_i}E^*_{is_i}]$ for any $1\leq i\leq \nu, 1\leq j\leq s_i$ (see eg.~\cite[11.1]{NemOSZ}
or~\cite[Section 6.1]{LNehrhart}).

For any cycle $\ell'\in L'$ the $E_0$-coefficient of $\ell'$ will be denoted by $\ell'_0:=(\ell',-E^*_0)$.
Furthermore, if $\ell'=c_0E^*_0+\sum_{i,j}c_{ij}E^*_{ij}\in L'$, then we define its {\it reduced transform}
by
\begin{equation} \label{eq:RED} \ell'_{red}:=c_0 E^*_0+\sum_{i,j}c_{ij}\cdot d^i_{j+1,s_i} E^*_i.\end{equation}
Write $\ell'_{red}=\sum_{i=0}^{\nu} c_i E^*_i$.
The above identities show that $[\ell']=[\ell'_{red}]$
in $H$, $\ell'_0=\ell'_{red,0}$, and this coefficient can be expressed as
\begin{equation} \label{sh}
\ell'_{red,0}=\frac{1}{|e|}(c_0+\sum_{i=1}^\nu \frac{c_i}{d_i}).\end{equation}
Thus, for any fixed $h\in H$, if $\ell'_h\in L'$ is a lift of $h$ then its reduced
transform is a reduced lift of $h$ with the same $E_0$-coefficient.

\subsubsection{} \label{sss:hCRIT} {\bf The coefficients of $s_h$.} \

In this subsection we characterize those elements $\ell'=a_0E^*_0+\sum_{i,j}a_{ij}E^*_{ij}\in L'$, which
 are of type $s_h$ for some $h\in H$. This means that if we take $\{a_0, \{a_{ij}\}_{ij}\}$ generic transversal
discs along each irreducible exceptional divisor $\{E_0,\{E_{ij}\}_{ij}\}$, then their collection constitutes
the strict transform of a minimal generic curve. We follow \cite[Prop. 11.5]{NemOSZ}.

Let us consider the reduced transform $\ell'_{red}=\sum_{i=0}^{\nu} a_i E^*_i$ of $\ell'$ as well.

Clearly $\ell'$ determines $\ell'_{red}$ canonically by~\eqref{eq:RED}.
Conversely, we claim that whenever $\ell'$ equals some $s_h$ then
from its reduced transform $\ell'_{red}$ one can recover $\ell'$. First recall that in the cyclic quotient case, for
any $0\leq a<d$, the number $a$ (that is, the class $a[E^*_s]$) determines $s_{a[E^*_s]}=\sum _{i=1}^s a_i E^*_i$ via the algorithms described in section~\ref{sec:cyclic}, see eg.~\eqref{eq:cyclicRid}.

In a very same way, each $a_i$ ($i=1,\ldots, \nu$) determines $\{a_{ij}\}_{j=1}^{s_i}$ by the same algorithms,
eg. by~\eqref{eq:cyclicRid}.

Next we wish to characterize the coefficients of $\ell'_{red}$. By~\cite{NemOSZ},
$\ell'_{red}=\sum_{i=0}^\nu a_iE^*_i$ is the reduced transform of some
$s_h$ if and only if
\begin{equation}\label{eq:CRITRED}
\left\{ \begin{array}{ll} a_0\geq 0, \ d_i>a_i \geq 0 & (1\leq i\leq \nu) \\
1+a_0-kt+\sum_{i=1}^{\nu}\left\lfloor \frac{q_it+a_i}{d_i}\right\rfloor\leq 0 & \mbox{for any } t> 0.
\end{array}\right.
\end{equation}
Note that the last restriction for $t=1$ gives $a_0\leq k-1$ as well.

\subsubsection{}{\bf The topological Poincar\'e series reduced to the variable $v_0$.}
In the sequel, associated with any $h\in H$ we fix a reduced lift $\ell'=\sum_{i=0}^\nu c_i E^*_i$ and we define the
following quasi-linear function
\begin{equation}\label{eq:Da}
\FN_{\mathbf{c}}:\mathbb{Z}\to \mathbb{R}, \ \ \ \FN_{\mathbf{c}}(n):=
1+c_0+kn-\sum_{i=1}^\nu \Big\lceil \frac{q_in -c_i}{d_i}\Big\rceil, \ \ n \in\mathbb{Z},
\end{equation}
where the index vector $\mathbf{c}:=(c_0,\dots,c_\nu)$ reflects for the dependence of $\FN_{\mathbf{c}}$ on the
chosen reduced lift. Then, by~\cite[(6.2.3)]{LNehrhart} (see also \cite{LSzMonoids} for the general case) the
$h$-equivariant part of the topological Poincar\'e series $Z(\mathbf{t})$, reduced to the node $v_0$, can be
expressed as follows
\begin{equation}\label{eq:Zh}
Z_h(t)=\sum_{n \geq -\ell'_0} \max\{0,\FN_{\mathbf{c}}(n )\} \ t^{ n +\ell'_0}.
\end{equation}

\begin{rem}
Consider the trivial class $h=0$ with the obvious reduced lift $\ell'=0$.
Then the function from~\eqref{eq:Da} $\FN_{\mathbf{0}}(n ):=1+kn -\sum_{i=1}^m \lceil q_in /d_i\rceil$
is the \emph{Dolgachev--Pinkham--Demazure} function (see eg. \cite{Pinkham}) targeting the dimension of the
$n $-graded piece of the local algebra of the weighted homogeneous singularity whose resolution graph is~$\Gamma$.
\end{rem}

\subsubsection{\bf Interpretation of $Q_{[\ZK+s_h],v_0}(\ZK+s_h)$}\label{sec:Qcalc}
Now we can apply the previous section to express the counting function $Q_{[\ZK+s_h],v_0}(\ZK+s_h)$.
Fix $h\in H$ and write $s_h=a_0 E^*_0+\sum_{i,j}a_{ij}E^*_{ij}$ as above. We consider its reduced transform
$s_{h,red}=a_0 E^*_0+\sum_{i}a_{i}E^*_{i}$ where $a_i:=\sum_j a_{ij}d^i_{j+1,s_i}$.

The key here is to choose the special reduced lift $\ZK-E+s_{h,red}$ associated with the class $[\ZK+s_h]$,
which by using~\eqref{eq:ZK} reads as
$$\ZK-E+s_{h,red}=(a_0+\nu-2)E^*_0+\sum_i (a_{i}-1)E^*_i.$$
Its $E_0$-coefficient can be computed by the above intersection identities, it is
 $\gamma+s_{h,0}=(1/|e|)(a_0+\nu-2+\sum_i (a_i-1)/d_i)$,
with the notation $\gamma:=Z_{K,0}-1$.
\begin{rem}
Note that $\gamma$ is a key numerical invariant which can be expressed as $\gamma=(\nu-2-\sum_i 1/d_i)/|e|\in \mathbb{Q}$.
It has several names in the literature: the `exponent' of the weighted homogeneous germ $(X,0)$, or $-\gamma$ is called the
\emph{log discrepancy} of $E_0$. $\mathfrak{o}\gamma$ (where $\mathfrak{o}$ is the order of the class of $E^*_0$ in $H$)
also appears as the Goto--Watanabe $a$--invariant (cf. \cite[(3.1.4)]{GW}) of the universal abelian covering of $(X,0)$.
In \cite{neumann.abel} $e\gamma$ appears as the orbifold Euler characteristic (see also \cite[3.3.6]{NOem}).
\end{rem}

By~\eqref{eq:Zh} the corresponding $[\ZK+s_h]$-part of the topological Poincar\'e series is
\begin{equation}\label{eq:Z_ZKs}
\begin{aligned}
Z_{[\ZK+s_h]}(t)&=\sum_{n \geq -\gamma-s_{h,0}} \max\{0,\FN_{\widetilde{\mathbf{a}}}(n)\} \ t^{n +\gamma+s_{h,0}}, \\
\FN_{\widetilde{\mathbf{a}}}(n)&=a_0+\nu-1+kn -\sum_{i=1}^\nu \Big\lceil \frac{q_in -a_i+1}{d_i}\Big\rceil
\end{aligned}
\end{equation}
associated with the index vector $\widetilde{\mathbf{a}}=(a_0+\nu-2,a_1-1,\dots,a_{\nu}-1)$.

Note that the $E_0$--coefficient of $Z_k+s_h$ is one larger than $\gamma+s_{h,0}$.
Therefore, using the definition of the counting function one gets
\begin{equation}\label{eq:Qred}
Q_{[\ZK+s_h],v_0}(\ZK+s_h)=\sum_{-\gamma-s_{h,0}\leq n \leq 0} \max\{0,\FN_{\widetilde{\mathbf{a}}}(n )\}.
\end{equation}
\subsubsection{}\label{sss:Imed}
Let us mention immediately that the summation interval is not empty, that is,
$-\gamma-s_{h,0}\leq 0$ whenever $h\not=0$. Indeed,
$s_{h,0}=(a_0+\sum_ia_i/d_i)/|e|$, cf.~\eqref{sh}.
On the other hand, by \cite{NemOSZ}, $Z_{K,0}=1+\chi/e$, where $\chi$ is the orbifold Euler characteristic $2-\nu +\sum_i 1/d_i$.
Therefore we have to check the inequality
$$a_0+\sum_i a_i/d_i\geq \chi= 2-\nu+\sum_i1/d_i,$$
which follows by an elementary computation, since each $d_i \geq 2$ and at least one
of the member of $\{a_0,a_1,\ldots, a_\nu\}$ is non--zero.

\subsubsection{}
Next, we relate the above formula~\eqref{eq:Qred} with another expression associated with the class $h\in H$.
By the `combinatorial duality' Theorem 4.4.1 of \cite{LNNdual} we have that $Q_{[\ZK]+h,v_0}(\ZK-r_{-h})=\pc(Z_{-h}(t))$,
where ${\rm pc}(Z_{-h}(t))$ denotes the \emph{periodic constant} of the series $Z_{-h}$.
Note that $Q_{[\ZK]+h,v_0}(\ZK-r_{-h})$ counts the coefficients in
$Z_{[\ZK+s_h]}(t)$ with the condition that the exponent $n +\gamma+s_{h,0}$ satisfies
$0\leq n +\gamma+s_{h,0}<Z_{K,0}-r_{-h,0}=\gamma+1-r_{-h,0}$, or,
$-\gamma -s_{h,0}\leq n< 1-r_{-h,0}-s_{h,0}$.

Write $s_{h,0}$ as $r_{h,0}+\Delta_h$ for some $\Delta_h \in \ZZ_{\geq 0}$. Note also that
 $r_{-h,0}=1-r_{h,0}$ if $r_{h,0}\not=0$, otherwise it is zero. Hence $1-r_{-h,0}-s_{h,0}=1-r_{-h,0}-r_{h,0}-\Delta_h$ equals
$-\Delta_h$ if $r_{h,0}\not=0$, and equals $-\Delta_h+1 $ otherwise. It can also be written as $ 1+\lfloor -s_{h,0} \rfloor $.
Hence
\begin{equation}\label{eq:Qred2}{\rm pc}(Z_{-h}(t))
=\sum_{-\gamma-s_{h,0}\leq n \leq \lfloor -s_{h,0} \rfloor} \max\{0,\FN_{\widetilde{\mathbf{a}}}(n )\}.
\end{equation}
Note that the summation in the right-hand side is non--empty only if $-\gamma-s_{h,0}\leq \lfloor -s_{h,0} \rfloor$.
This in the case $r_{h,0}=0$ means $\gamma\geq 0$, and in the case $r_{h,0}\not =0$ means $\gamma+r_h\geq 1$.
(Otherwise ${\rm pc}(Z_{-h}(t))=0$ automatically.)

Note that in general we have the following facts. If $(X,0)$ is ADE then $Z_K=0$, hence $\gamma=-1$. In all other cases $Z_K>0$, hence
$\gamma>-1$. If $(X,0)$ is quotient singularity then all the coefficients of $Z_k$ are less than 1, hence $\gamma<0$.

Summarized, if $-\gamma-s_{h,0}\leq \lfloor -s_{h,0} \rfloor$ then
\begin{equation}\label{eq:Qred3}
Q_{[\ZK+s_h],v_0}(\ZK+s_h)={\rm pc}(Z_{-h}(t))+
\sum_{1+\lfloor -s_{h,0}\rfloor \leq n \leq 0} \max\{0,\FN_{\widetilde{\mathbf{a}}}(n )\}.
\end{equation}
The sum on the right-hand side can also be empty in some cases.

Finally let us comment the expression ${\rm pc}(Z_{-h}(t))$. For any star-shaped graph
${\rm pc}(Z_{-h}(t))$ can be identified with the (normalized)
Seiberg--Witten invariant associated with the link (and spin$^c$ structure corresponding to $-h$). For this identity and expression see eg.
\cite{Nem-CLB} or \cite{LNehrhart}. If the CDGZ identity holds, then it equals the
equivariant geometric genus $ p_g(X,0)_{-h}$ (the $(-h)$--equivariant part of the geometric genus of the universal abelian covering) \cite{Nem-CLB}.
This is true even if the CDGZ identity does not hold, but the equivariant Seiberg--Witten Conjecture holds (see again \cite{Nem-CLB}).
 If $(X,0)$ is rational then it can be replaced by a much simpler combinatorial expression, see below.

We emphasize again that the identities of this subsection \ref{ss:TOPSTAR}
 hold for any star-shaped graph whose plumbing (Seifert) 3--manifold is a rational homology sphere,
and the rationality of $\Gamma$ is not required.

\begin{rem}\label{rem:BOUND}
Using the criterion~\eqref{eq:CRITRED} one verifies that $\FN_{\widetilde{\mathbf{a}}}(n)\leq \nu-2$ for any $n<0$.
\end{rem}

\subsection{The case of rational star-shaped graphs}\label{ss:RATSTAR}

Next, we wish to add some comments regarding the formulae of subsection \ref{ss:TOPSTAR}
in the case when $\Gamma$ is rational.
 By~\ref{sss:LauferCrit}, if $\Gamma$ is rational then necessarily $-e_{v_0}\geq \nu-1$,
where $e_{v_0}$ denotes the self-intersection $E_0^2$ of $E_0$.

In the rational case, the CDGZ identity, or the equivariant Seiberg--Witten Conjecture hold,
hence ${\rm pc}(Z_{-h}(t)) = p_g(X,0)_{-h}$. Furthermore, it can be reinterpreted combinatorially (see
 \cite[Cor. 4.5.3]{NemGR} or \cite[6.4]{LNehrhart}) as
$${\rm pc}(Z_{-h}(t)) = \chi(r_{-h})-\chi(s_{-h}).$$
The identity ${\rm pc}(Z_{-h}(t)) = p_g(X,0)_{-h}$ has the following interpretation too.
Note that $\sum_h p_g(X,0)_{-h}$ is exactly the geometric genus of the universal abelian covering.
$ p_g(X,0)_{0}=p_g(X,0)=0$ by the rationality of $(X,0)$, but the vanishing of the other terms tests whether the universal abelian
covering is rational or not.
(Eg., the rational graph with Seifert invariants
$(-2; (3,1)_{i=1}^{3})$ has non--rational universal abelian covering, cf.
 \cite[Ex. 4.5.4]{NemGR}.)

If $(X,0)$ is a quotient singularity then the universal abelian covering is an ADE germ, hence in this case
${\rm pc}(Z_{-h}(t)) = \chi(r_{-h})-\chi(s_{-h})= p_g(X,0)_{-h}=0$ for any $h\in H$.

\subsection{The delta invariant formula revisited}
 Assume that $\Gamma$ is star-shaped and rational, and we adopt the notations and results of the previous
subsections.

Let us compute the contribution $\FN_{\widetilde{\mathbf{a}}}(0)$. Note that $a_i=0$ if and only if
$\sum _ja_{ij}=0$ if and only if $r_i(C)=0$. Therefore,
$$\FN_{\widetilde{\mathbf{a}}}(0)=
a_0+\nu-1-\sum_{i:r_i(C)\neq 0}\left\lceil \frac{1-a_i}{d_i}\right\rceil-\sum_{i:r_i(C)=0}\left\lceil \frac{1}{d_i}\right\rceil.$$
Since $a_i<d_i$ (cf.~\eqref{eq:CRITRED}), we get $\FN_{\widetilde{\mathbf{a}}}(0)=a_0+\nu-1-\#\{i\,:\, r_i(C)=0\}$.
This identity combined with Proposition \ref{thm:deltasurg} and~\eqref{eq:Qred} provides the following.
\begin{thm}\label{thm:SUM} If $(C,0)$ is a minimal generic curve germ on a rational singularity $(X,0)$ with a star-shaped graph,
$r(C)$ is the number of components of $(C,0)$, and $\sum_{i=0}^\nu a_iE^*_i$ is the reduced transform of $s_h\not=0$, then
\begin{equation}\label{eq:KELL}
\delta(C)=r(C)-1+\sum_{-\gamma-s_{h,0}\leq n \leq -1} \max\{0,\FN_{\widetilde{\mathbf{a}}}(n )\}.
\end{equation}
In particular, $(C,0)$ is an ordinary $r(C)$--tuple if and only if the above sum vanishes.

Furthermore, if $-\gamma-s_{h,0}\leq \lfloor -s_{h,0} \rfloor$ then
\begin{equation}\label{eq:Qred4}
\delta(C)=r(C)-1 +\chi(r_{-h})-\chi(s_{-h})+
\sum_{-\lfloor s_{h,0}\rfloor \leq n \leq -1 } \max\{0,\FN_{\widetilde{\mathbf{a}}}(n )\}.
\end{equation}
\end{thm}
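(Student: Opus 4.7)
\emph{Proof plan.} The argument combines three ingredients: the surgery formula of Proposition~\ref{thm:deltasurg}, the explicit expression \eqref{eq:Qred} for the one-variable counting function at the central node, and a direct evaluation of $\FN_{\widetilde{\mathbf{a}}}(0)$. First, since $\Gamma$ is star-shaped, the unique node is $v_0$ and the connected components of $\Gamma\setminus\{v_0\}$ are precisely the legs $\Gamma_1,\ldots,\Gamma_\nu$. Consequently Proposition~\ref{thm:deltasurg} reduces to
$\delta(C)=Q_{[\ZK+s_h],v_0}(\ZK+s_h)+\sum_{i:\,r_i(C)>0}(r_i(C)-1)$,
and the plan is to substitute \eqref{eq:Qred} for the first term and isolate the summand at $n=0$.

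The key computation is $\FN_{\widetilde{\mathbf{a}}}(0)$. From \eqref{eq:Z_ZKs} this equals $a_0+\nu-1-\sum_{i=1}^\nu\lceil(1-a_i)/d_i\rceil$, and the constraint $0\le a_i<d_i$ from \eqref{eq:CRITRED} forces the $i$-th ceiling to be $0$ when $a_i\ge 1$ and $1$ when $a_i=0$. Noting that $a_i=0$ is equivalent to $r_i(C)=0$ (since $a_i=\sum_j d^i_{j+1,s_i}a_{ij}$ is a positive combination of the $a_{ij}$), I obtain $\FN_{\widetilde{\mathbf{a}}}(0)=a_0+\#\{i:r_i(C)>0\}-1$. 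The hypothesis $s_h\neq 0$ guarantees this is non-negative, so the $\max$ at $n=0$ coincides with this value. Combining with $\sum_{i:r_i(C)>0}(r_i(C)-1)=\sum_i r_i(C)-\#\{i:r_i(C)>0\}$ and $r(C)=a_0+\sum_i r_i(C)$, the $n=0$ contribution and the leg sum telescope to $r(C)-1$, producing \eqref{eq:KELL}. The ordinary $r(C)$-tuple characterization then follows immediately from \ref{ex:delta22}.

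For \eqref{eq:Qred4}, I would use the rewriting \eqref{eq:Qred3}, which expresses $Q_{[\ZK+s_h],v_0}(\ZK+s_h)$ as $\pc(Z_{-h}(t))$ plus a tail running over $1+\lfloor-s_{h,0}\rfloor\le n\le 0$ whenever $-\gamma-s_{h,0}\le\lfloor-s_{h,0}\rfloor$. The rationality of $(X,0)$ delivers $\pc(Z_{-h}(t))=\chi(r_{-h})-\chi(s_{-h})$ as recalled in Subsection~\ref{ss:RATSTAR}, and performing the same extraction of the $n=0$ summand converts the tail into a sum over $-\lfloor s_{h,0}\rfloor\le n\le -1$ (using the identity $1+\lfloor-s_{h,0}\rfloor=-\lfloor s_{h,0}\rfloor$). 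The one subtlety I expect to watch carefully is the evaluation of $\FN_{\widetilde{\mathbf{a}}}(0)$: one must invoke $a_i<d_i$ exactly once for the ceilings and then verify non-negativity in order to drop the $\max$, without which the clean telescoping to $r(C)-1$ would fail.
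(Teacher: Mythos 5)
Your proposal is correct and follows essentially the same route as the paper: it combines Proposition~\ref{thm:deltasurg} with the counting-function expression~\eqref{eq:Qred}, evaluates $\FN_{\widetilde{\mathbf{a}}}(0)=a_0+\nu-1-\#\{i:r_i(C)=0\}$ using $0\le a_i<d_i$ from~\eqref{eq:CRITRED}, and telescopes the $n=0$ term with the leg contributions to get $r(C)-1$; the second formula is likewise obtained from~\eqref{eq:Qred3} and the identification ${\rm pc}(Z_{-h}(t))=\chi(r_{-h})-\chi(s_{-h})$ of Subsection~\ref{ss:RATSTAR}, exactly as in the paper.
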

Recall that $\chi(r_{-h})\geq \chi(s_{-h})$.
Hence the above expressions show exactly how far the minimal generic curve numerically from an ordinary $r(C)$-tuple is.

\subsubsection{}\label{sss:FURTHERTOP}{\bf Further topological connections.}
Consider again the series $Z_{[\ZK+s_h]}(t)$ from~\eqref{eq:Z_ZKs}.
The following series is called the {\it polynomial part of $Z_{[\ZK+s_h]}(t)$} in \cite[(6.2.4)]{LNehrhart}:
\begin{equation}\label{eq:Z_ZKs+}
Z^+_{[\ZK+s_h]}(t)=\sum_{n \geq -\gamma-s_{h,0}} \max\{0,-\FN_{\widetilde{\mathbf{a}}}(n)\} \ t^{n +\gamma+s_{h,0}}.
\end{equation}
According to \cite{LNehrhart,NemGR}, since $(X,0)$ is rational,
$Z^+_{[\ZK+s_h]}(1)=p_g(X,0)_{[Z_K]+h}=\chi(r_{[\ZK]+h)})-\chi(s_{[\ZK]+h})$.
Therefore, if the universal abelian covering of $(X,0)$ is rational then
$Z^+_{[\ZK+s_h]}(1)=p_g(X,0)_{[\ZK]+h}=0$, hence in this case
$\FN_{\widetilde{\mathbf{a}}}(n)\geq 0$ for all $n\geq -\gamma-s_{h,0}$.

This fact, for $(X,0)$ quotient singularity, and for the relevant interval $ -\gamma-s_{h,0}\leq n\leq 0$, will be
verified combinatorially in~\ref{sss:SECONDCLAIM}.

\section{Minimal generic curves on quotient singularities}\label{sec:quotient}

\subsection{The general formula}
Let $(X,0)$ be a non-cyclic quotient singularity.

It is well--known that $(X,0)$ is a quotient singularity if and only if $\Gamma$ is {\it numerically log terminal}, that is,
all the coefficients of $Z_K$ are $<1$.

If $\Gamma$ is minimal and star-shaped (but not a string) then $(X,0)$ is quotient if and only if $\nu=3$ and $\sum_i1/d_i>1$.
(Recall that from Laufer algorithm we also have $k\geq 2$.)

In particular, $\gamma\in [-1,0) $ and the inequality $-\gamma-s_{h,0}\leq \lfloor -s_{h,0}\rfloor$ never happens.

Note that $\nu=3$ and
 Remark \ref{rem:BOUND} show that $\max\{0,\FN_{\widetilde{\mathbf{a}}}(n )\}\in \{0,1\}$ for any $n<0$.
In fact, by \ref{sss:FURTHERTOP}, $\max\{0,\FN_{\widetilde{\mathbf{a}}}(n )\}=\FN_{\widetilde{\mathbf{a}}}(n )$
 for $n<0$, a fact which will be reproved in \ref{sss:SECONDCLAIM}.

Then from the previous section we have the delta invariant formula~\eqref{eq:KELL}, subject to the new conditions
imposed by the fact that $(X,0)$ is a non--cyclic quotient singularity.

\subsection{} Let us concentrate first one the summation interval in the formula of~\eqref{eq:KELL} ($h\not=0$).

Note that if $-\gamma-s_{h,0}\leq -1$, then $s_{h,0}\geq 1-\gamma$. Since $\gamma<0$ (because $(X,0)$ is a quotient)
we get that necessarily $s_{h,0}>1$. This proves (via~\eqref{eq:KELL}) the following.

\subsubsection{}\label{sss:FIRSTCLAIM}{\bf Claim 1.} {\it If $s_{h,0}\leq 1$ then $\delta(C)=r(C)-1$, hence $(C,0)$ is an
ordinary $r(C)$--tuple. }\\

\noindent Next, we wish to eliminate `$\max$' from the expression $\max\{0,\FN_{\widetilde{\mathbf{a}}}(n)\}$ for some $n$.

\subsubsection{}\label{sss:SECONDCLAIM}{\bf Claim 2.} {\it If $-\gamma-s_{h,0}\leq -t\leq 0$, then
$\FN_{\widetilde{\mathbf{a}}}(-t )\geq 0$. }

Indeed, using the same identities as in \ref{sss:Imed}, we see that $-\gamma-s_{h,0}\leq -t$
reads as
$$a_0+\sum_ia_i/d_i\geq t |e|+\chi=t(k-\sum_i q_i/d_i) -1+\sum_i1/d_i,$$
or
\begin{equation}\label{eq:A1}
\FP_{\widetilde{\mathbf{a}}}(t)\geq 0, \ \ \\ \mbox{where} \ \ \FP_{\widetilde{\mathbf{a}}}(t)
:=1+a_0-kt+\sum_i\frac{q_it+a_i-1}{d_i}.
\end{equation}
This can/should be compared with the following two expressions. The first one is
\begin{equation}\label{eq:-t}
\FN_{\widetilde{\mathbf{a}}}(-t)=2+a_0-kt + \sum_{i=1}^\nu \Big\lfloor \frac{q_it +a_i-1}{d_i}\Big\rfloor.\end{equation}
The second one is the expression from~\eqref{eq:CRITRED}
\begin{equation}\label{eq:-tt}
\FR_{\widetilde{\mathbf{a}}}(t)=1+a_0-kt + \sum_{i=1}^\nu \Big\lfloor \frac{q_it +a_i}{d_i}\Big\rfloor.\end{equation}
By the minimality assumption and~\eqref{eq:CRITRED}, if $ \widetilde{\mathbf{a}}$ represents some $s_h$, then
 $\FR_{\widetilde{\mathbf{a}}}(t)\leq 0$ for any $t>0$. Also, as we already mentioned (and it can be seen easily),
$\FN_{\widetilde{\mathbf{a}}}(-t)\leq \FR_{\widetilde{\mathbf{a}}}(t) +1$.

We wish to prove that~\eqref{eq:A1} implies 
$\FN_{\widetilde{\mathbf{a}}}(-t)\geq 0$. Write $(q_it+a_i)/d_i$ as $\lfloor (q_it+a_i)/d_i\rfloor + r_i/d_i$
for some $0\leq r_i<d_i$. Then
$$\FN_{\widetilde{\mathbf{a}}}(-t)-\FP_{\widetilde{\mathbf{a}}}(t)=1+\sum_{i: r_i=0} \big( -1+\frac{1}{d_i}\big)+
\sum_{i: r_i\not =0} \big( -\frac{r_i-1}{d_i}\big)=
1+\sum_i \frac{1}{d_i}+\sum_{i: r_i=0} (-1)+
\sum_{i: r_i\not =0} \frac{-r_i}{d_i}.
$$
Since $\sum_i1/d_i>1$ (and we have three legs), the right-hand side is $>-1$, hence $\FN_{\widetilde{\mathbf{a}}}(-t)-\FP_{\widetilde{\mathbf{a}}}(t)>-1$,
or $\FN_{\widetilde{\mathbf{a}}}(-t)>\FP_{\widetilde{\mathbf{a}}}(t)-1\geq -1$. Since it is an integer, we obtain the claim.

Note also that
$\FN_{\widetilde{\mathbf{a}}}(-1)-\FR_{\widetilde{\mathbf{a}}}(1)=1-\#\{i\,:\, r_i=0\}$. Hence we get
\subsubsection{}\label{sss:THIRDCLAIM}{\bf Claim 3.} {\it
$\FN_{\widetilde{\mathbf{a}}}(-1 )=1$ then necessarily $\FR_{\widetilde{\mathbf{a}}}(1)=0$ and
$r_i>0$ for all $i$. }\\

Maybe it is worth to notice that for
 $\FN_{\widetilde{\mathbf{a}}}(-1 )$ we have the upper bound 1,
 and we also proved that $\FN_{\widetilde{\mathbf{a}}}(-1 )\geq 0$ whenever
 $-\gamma-s_{h,0}\leq -1$.
 But, in general, there exists no lower bound, for it, it can be
a very negative number (if $k\ll 0$).

\subsection{}
We can now restate Theorem~\ref{thm:SUM} for the particular case of quotient surface singularities
providing a very explicit formula for the delta invariant of minimal generic curves. The two exceptional
cases where the general formula does not apply are described by their Seifert invariants (see~\ref{sec:SfZ})
and the description of the minimal generic cycle $s_h$ follows the notation in Figures~\ref{fig:exceptional16} 
and~\ref{fig:exceptional17}.

\begin{thm}\label{thm:SUM2:alt}
If $(C,0)$ is a minimal generic curve germ on a quotient singularity $(X,0)$, then
$$
\delta(C)=r(C)-1+\varepsilon(C), \quad \quad \varepsilon(C)\in\{0,1\}.
$$
The explicit value of $\varepsilon(C)$ is given as:
$$
\varepsilon(C)=
\begin{cases}
0 & \text{ if } (X,0) \text{ is cyclic, $\mathbb E_6$, $\mathbb E_7$, or } s_{h,0}\leq 1, \\
1 & \text{ if } (X,0) \text{ is } (-2;(2,1),(3,2),(5,2)) \text{ and } s_h=E_{3,1}^*+E^*_{1,1},\\
1 & \text{ if } (X,0) \text{ is } (-2;(2,1),(3,2),(5,3)) \text{ and } s_h=E^*_{1,1},\\
\FN_{\widetilde{\mathbf{a}}}(-1) & \text{ otherwise,}
\end{cases}
$$
where
\begin{equation}\label{eq:-ta}
\FN_{\widetilde{\mathbf{a}}}(-1)=2+a_0-k + \sum_{i=1}^\nu \Big\lfloor \frac{q_i +a_i-1}{d_i}
\Big\rfloor.
\end{equation}
Moreover, $(C,0)$ is of type $R^r_r$ if $\varepsilon(C)=0$, and of type
$R^{r-1}_r$ if $\varepsilon(C)=1$ (cf. Examples \ref{ex:delta22} and \ref{ex:delta33}).
\end{thm}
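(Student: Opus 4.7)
\emph{Approach.} The plan is to derive Theorem~\ref{thm:SUM2:alt} directly from Theorem~\ref{thm:SUM}, which already gives
\begin{equation*}
\delta(C) = r(C) - 1 + \sum_{-\gamma - s_{h,0} \leq n \leq -1} \max\{0, \FN_{\widetilde{\mathbf{a}}}(n)\}.
\end{equation*}
Thus $\varepsilon(C)$ is precisely this extra sum; the entire task is to evaluate it in each listed regime. The cyclic case reduces at once to Theorem~\ref{thm:cyclic}, giving $\varepsilon(C) = 0$. In the remaining non-cyclic quotient cases, the minimal graph is star-shaped with $\nu = 3$ legs and $\sum_i 1/d_i > 1$, so $(d_1, d_2, d_3) \in \{(2,2,n),\, (2,3,3),\, (2,3,4),\, (2,3,5)\}$.

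\emph{Uniform bounds on the summands.} First I would establish that each summand $\max\{0, \FN_{\widetilde{\mathbf{a}}}(n)\}$ over the relevant range lies in $\{0, 1\}$. The upper bound $\FN_{\widetilde{\mathbf{a}}}(n) \leq \nu - 2 = 1$ is Remark~\ref{rem:BOUND}, and the nonnegativity on this interval is Claim~2 of \ref{sss:SECONDCLAIM}. Consequently $\varepsilon(C) = \#\{t : 1 \leq t \leq \lfloor \gamma + s_{h,0}\rfloor,\ \FN_{\widetilde{\mathbf{a}}}(-t) = 1\} \in \mathbb{Z}_{\geq 0}$. If $s_{h,0} \leq 1$, Claim~1 of \ref{sss:FIRSTCLAIM} makes the summation range empty, disposing of that subcase with $\varepsilon = 0$.

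\emph{Pointwise criterion and case analysis.} Comparing the expressions \eqref{eq:-t} and \eqref{eq:-tt} I would write
\begin{equation*}
\FN_{\widetilde{\mathbf{a}}}(-t) = \FR_{\widetilde{\mathbf{a}}}(t) + 1 - \#\{i : d_i \mid q_i t + a_i\},
\end{equation*}
and since the minimality condition \eqref{eq:CRITRED} forces $\FR_{\widetilde{\mathbf{a}}}(t) \leq 0$, the equality $\FN_{\widetilde{\mathbf{a}}}(-t) = 1$ is equivalent to $\FR_{\widetilde{\mathbf{a}}}(t) = 0$ \emph{together with} $d_i \nmid q_i t + a_i$ for every $i$ (the $t=1$ instance is Claim~3 of \ref{sss:THIRDCLAIM}). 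I would then run through the four Seifert types, enumerating admissible pairs $(q_i, a_i)$ via the generalized Laufer algorithm \ref{ss:GLA} and the characterization \eqref{eq:CRITRED} of minimal generic cycles. For $(2,3,3)$ and $(2,3,4)$, the simultaneous constraints $\FR_{\widetilde{\mathbf{a}}}(t) = 0$ and the non-divisibility conditions are expected to be incompatible for every $t$ in the allowed range, giving $\varepsilon(C) = 0$ uniformly. For $(2,2,n)$ and for the $(2,3,5)$ family outside the two listed Seifert profiles, the plan is to show that those two conditions can only hold at $t = 1$, so that $\varepsilon(C) = \FN_{\widetilde{\mathbf{a}}}(-1)$ and no further summand contributes.

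\emph{The exceptional $\mathbb{E}_8$ pair.} The main obstacle is isolating the two exceptions in the $\mathbb{E}_8$ family: for Seifert profiles $(-2; (2,1), (3,2), (5,2))$ and $(-2; (2,1), (3,2), (5,3))$ and the specified cycles $s_h$, I have to verify directly that $\FN_{\widetilde{\mathbf{a}}}(-1) = 0$ while $\FN_{\widetilde{\mathbf{a}}}(-t) = 1$ for exactly one $t \geq 2$ in the range, so the total is $1$. This is a finite but delicate arithmetic check that exploits the fact that $\sum_i 1/d_i = 31/30$ is barely above $1$ and $|e|$ is minimal, which is precisely what allows $\lfloor \gamma + s_{h,0}\rfloor$ to be large enough to admit a second lattice contribution. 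Once this enumeration is complete, the final dichotomy between the $R^r_r$ and $R^{r-1}_r$ types follows from the characterizations in \ref{ex:delta22} and \ref{ex:delta33}.
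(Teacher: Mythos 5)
Your skeleton coincides with the paper's: both start from Theorem~\ref{thm:SUM}, use Remark~\ref{rem:BOUND} and Claim~2 of \ref{sss:SECONDCLAIM} to confine each summand to $\{0,1\}$, dispose of $s_{h,0}\leq 1$ via Claim~1, and finish the two exceptional graphs by finite enumeration. Where you genuinely diverge is in the reduction to the single term $n=-1$: the paper proves Lemma~\ref{lem:LEMMA1} (for $k\geq 3$ one has $s_{h,0}<2$, so the summation window simply does not reach $t=2$) and Lemma~\ref{lem:LEMMA2} (for $k=2$, $q_1=q_2=1$ one has $\FN_{\widetilde{\mathbf{a}}}(-(t+1))\leq \FR_{\widetilde{\mathbf{a}}}(t)\leq 0$), treats $\mathbb E_6$ and $\mathbb E_7$ by a geometric smoothness argument, and only then enumerates the two leftover graphs over all of $H$; you instead promote the $t=1$ computation behind Claim~3 to the exact identity $\FN_{\widetilde{\mathbf{a}}}(-t)=\FR_{\widetilde{\mathbf{a}}}(t)+1-\#\{i:\, d_i\mid q_it+a_i\}$, which together with $\FR_{\widetilde{\mathbf{a}}}(t)\leq 0$ gives the clean criterion $\FN_{\widetilde{\mathbf{a}}}(-t)=1\Leftrightarrow \FR_{\widetilde{\mathbf{a}}}(t)=0$ and $d_i\nmid q_it+a_i$ for all $i$, to be tested leg-type by leg-type. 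That criterion is correct and arguably more uniform than the paper's three separate mechanisms, but the decisive verifications are only announced, not carried out: the incompatibility claims for $(2,3,3)$ and $(2,3,4)$, the restriction to $t=1$ for $(2,2,n)$, and above all the enumeration inside the two exceptional singularities, which must cover \emph{every} nonzero $h$ in $\ZZ_{13}$ and $\ZZ_7$ (not just the two listed cycles $s_h$), since for the other classes one still has to rule out contributions from $t\geq 2$ --- this is exactly what the tables in \ref{sec:exceptional16} and \ref{sec:exceptional17} do. Executing those finite checks would complete your argument.
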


\begin{rem}
\mbox{}
\begin{enumerate}
\item
The exceptional cases in Theorem~\ref{thm:SUM2:alt} are special in the sense that
$\FN_{\widetilde{\mathbf{a}}}(-1)=0$
and $\FN_{\widetilde{\mathbf{a}}}(-2)=1$ (see Examples~\ref{sec:exceptional16}\eqref{sec:exceptional16:special}
and~\ref{sec:exceptional17}\eqref{sec:exceptional17:special}).
These are the only exceptions in the quotient singularity case where there is a contribution from
$\FN_{\widetilde{\mathbf{a}}}(n)=0$ for $ n<-1$ (compare with equation~\eqref{eq:KELL}).
\item In~\eqref{eq:-ta} the term $\FN_{\widetilde{\mathbf{a}}}(-1)$ is in general non-trivial,
as we will see several examples in the sequel.
This corresponds in particular to the fact that not any minimal general curve is an ordinary $r(C)$--tuple.

\item On the other hand, the above Theorem has the following consequence: If $(C,0)$ is a minimal
generic curve on $(X,0)$ then any sub-collection of $(C,0)$ with $r(C)-1$ components is an ordinary $(r(C)-1)$--tuple
(since any proper subsets of any $R^r_r$ or $R^{r-1}_r$ is an ordinary $r'$--tuple).
This means that a nontrivial contribution of type $\FN_{\widetilde{\mathbf{a}}}(-1)$  can  appear only  for the
`maximal elements' of type $s_h$ (and not for all of them). (Compare also with \ref{sss:THIRDCLAIM}.)
Maximality here means the following: $s_h$ is maximal if $s_h+E^*_v$ does not equal  $s_{h+[E^*_v]}$ for any $v$.

This fact is hard to see directly. However, we can exemplify it in the following two cases.

First  assume that $\widetilde{\mathbf{a}}$ represents $s_h$ and $s_h+E^*_0=s_{h+[E^*_0]}$ too, that is,
$\widetilde{\mathbf{a}}+E^*_0$ is also a `minimal system'.  Then
$\FN_{\widetilde{\mathbf{a}}}(-t)\leq \FR_{\widetilde{\mathbf{a}}}(t) +1=
\FR_{\widetilde{\mathbf{a}}+E^*_0}(t)\leq 0$ for any $t>0$.

Similarly, if $d_1=2$, and $E_1$ is the $(-2)$--exceptional curve of this leg, we have a similar
statement (with a slightly different proof).
If $\widetilde{\mathbf{a}}$ represents $s_h$ and $s_h+E^*_1=s_{h+[E^*_1]}$, then
$\FN_{\widetilde{\mathbf{a}}}(-t)=
 \FR_{\widetilde{\mathbf{a}}+E^*_1}(t)\leq 0$ for any $t>0$.
\end{enumerate}
\end{rem}

\begin{exam}\label{sec:rational-star-shaped-non-quotient}{\bf A non-quotient rational singularity.}
We end this section with an example, which shows that  if $(X,0)$ is a non-quotient singularity  then
$\delta(C) -r(C)+1$ can be strict larger than one (hence $(C,0)$ is not of type $R^r_r$ or $R^{r-1}_r$).

Assume that the graph of $(X,0)$ is the following.

\begin{figure}[ht]
\begin{tikzpicture}[scale=.4]
\node [shape=circle,fill,draw=black,scale=.6] (O) at (0,0) {}
node [left=5] at (O) {$E_0(-4)$}
node [right=5] at (O) {$E_0^* = (\frac{1}{2},\frac{1}{4},\frac{1}{4},\frac{1}{4},\frac{1}{4})$};
\node[shape=circle,fill, draw=black,scale=.6] (A) at (-3,3) {}
node [left=4] at (A) {$E_1 (-2)$};
\node[shape=circle,fill, draw=black,scale=.6] (B) at (3,3) {}
node [right=5] at (B) {$E_2 (-2)$};
\node[shape=circle,fill, draw=black,scale=.6] (C) at (-3,-3) {}
node [left=5] at (C) {$E_3 (-2)$};
\node[shape=circle,fill, draw=black,scale=.6] (D) at (3,-3) {}
node [right=5] at (D) {$E_4 (-2)$};
\path [-,very thick] (A) edge (0,0);
\path [-,very thick] (B) edge (0,0);
\path [-,very thick] (C) edge (0,0);
\path [-,very thick] (D) edge (0,0);
\end{tikzpicture}
\end{figure}

One verifies that  $Z_K = 2 E_0^*$ and  $\Zmin =E$, hence $(X,0)$ is minimal rational.

Choose $\l' = 3 E_0^* \in \cS'$ and $h = [\l'] \in H$.  The cycle $r_h$ is $\l'-E_0$ and it does not
belong to $\cS'$, so $s_h = \l'$. Let $C$ be the  reduced curve  associated with $s_h$.
Clearly $r(C)=3$.

Note that $\gamma=Z_{K,0}-1=0$, hence  in (\ref{eq:KELL}) $-\gamma-s_{h,0}=-3/2$  and
$\delta(C)=r(C)-1+\max\{0, \FN_{\widetilde{\mathbf{a}}}(-1)\}$.
But a direct verification shows that  $\FN_{\widetilde{\mathbf{a}}}(-1)=2$.

This example shows that $\nu = 3$ is a necessary condition in Theorem~\ref{thm:SUM2:alt}.
In fact, increasing $\nu = k$, one can construct rational star-shaped graphs as above with $(-2)$-legs  and
$\delta(C) - r(C) = k-3$.

\end{exam}

\section{Proof of Theorem \ref{thm:SUM2:alt}}\label{ss:PROOF}
Recall that Theorem~\ref{thm:SUM2:alt} in the cyclic case has already been discussed in section  \ref{sec:cyclic}.
For the non-cyclic quotient singularities the proof is based on~\eqref{eq:KELL},
after showing that, except for the exceptional cases, only $n=-1$ might give a non-zero contribution from
$\FN_{\widetilde{\mathbf{a}}}(n)$. This will be the purpose of the coming section.

\subsection{Preliminary results}
First we will prove two combinatorial lemmas covering different families (with some overlaps) of non-cyclic
quotient singularities. 

\begin{lemma}\label{lem:LEMMA1} If
$(X,0)$ is a non-cyclic quotient singularity, then for any $h\in H$ the following facts hold:
$$\left\{
\begin{array}{l} \mbox{if $k\geq 3$ then $s_{h,0}<2$} \\
\mbox{if $k=2$ and $q_1=q_2=q_3=1$ then $s_{h,0}\leq 2$.} \end{array} \right.$$
\end{lemma}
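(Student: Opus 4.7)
My plan is to convert the minimality criterion~\eqref{eq:CRITRED} into a linear inequality for $s_{h,0}$, evaluate it at $t=1$, and then split into the two numerical regimes of the statement.

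First I would isolate the algebraic identity behind the criterion. Writing $\lfloor x \rfloor = x - \{x\}$ in the expression
\[ L(t) := 1 + a_0 - kt + \sum_{i=1}^{3}\lfloor (q_i t + a_i)/d_i\rfloor \]
and inserting $s_{h,0}|e| = a_0 + \sum_i a_i/d_i$ from~\eqref{sh} together with $|e| = k - \sum q_i/d_i$, the criterion $L(t) \le 0$ rearranges into the clean form
\[ (s_{h,0} - t)|e| \le -1 + \sum_{i=1}^{3}\{(q_i t + a_i)/d_i\} \]
for every integer $t\ge 1$. This is the central identity driving the whole proof.

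Next I would specialize to $t = 1$ and bound each fractional part trivially by $(d_i-1)/d_i$. Setting $S := \sum_i 1/d_i$, this yields $(s_{h,0}-1)|e| \le 2 - S$. The quotient assumption gives $S>1$, so in particular $2 - S < 1$. A companion lower bound on $|e|$ comes from $q_i \le d_i - 1$: $\sum q_i/d_i \le 3 - S$, hence $|e| = k - \sum q_i/d_i \ge k - 3 + S$. Note also $S \le 3/2$ since each $d_i\ge 2$.

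With these two inequalities the two cases of the lemma fall out immediately. For $k \ge 3$, the bound $|e| \ge S > 1$ combined with $2-S>0$ gives
\[ s_{h,0} \le 1 + \frac{2-S}{|e|} \le 1 + \frac{2-S}{S} = \frac{2}{S} < 2. \]
For $k = 2$ with $q_1=q_2=q_3=1$, one has $|e| = 2 - S$ exactly, so the key inequality collapses to $(s_{h,0}-1)|e| \le |e|$, giving $s_{h,0} \le 2$. I do not expect a genuine obstacle: the entire argument uses only the single case $t = 1$ of the infinite family~\eqref{eq:CRITRED}, together with elementary arithmetic with the standing constraints $S > 1$ and $q_i \le d_i - 1$. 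The only mildly non-obvious point is recognizing that unpacking the floor function via $\lfloor x \rfloor = x - \{x\}$ trades the combinatorial criterion for a one-line linear estimate on $s_{h,0}|e|$.
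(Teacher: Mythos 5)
Your proof is correct and follows essentially the same route as the paper: both arguments use only the $t=1$ instance of the minimality criterion~\eqref{eq:CRITRED} together with $a_i\le d_i-1$, $q_i\le d_i-1$ and $\sum_i 1/d_i>1$ to arrive at the same intermediate bound $(s_{h,0}-1)|e|\le 2-\sum_i 1/d_i$, with equality of the right-hand side to $|e|$ in the $k=2$, $q_i=1$ case. Your rewriting via $\lfloor x\rfloor = x-\{x\}$ is just a cleaner packaging of the paper's case analysis over the index set of legs with $q_i+a_i\ge d_i$, and the final arithmetic ($s_{h,0}\le 2/S$ versus the paper's $2-S<|e|$) differs only cosmetically.
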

\begin{proof}
Assume that $s_h=a_0E^*_0+\sum_{ij} a_{ij}E^*_{ij}$. Then by the discussion from \ref{sec:SfZ}
\begin{equation}\label{eq:need}
s_{h,0}=(s_{h,red})_0= \big(a_0+\sum_i a_i/d_i\big)/|e|.
\end{equation}
In the first case we need to show that this expression is $<2$.
Formula~\eqref{eq:Da} for $n=-1$ becomes
$$1+a_0-k +\sum_i \lfloor (q_i+a_i)/d_i\rfloor\leq 0.$$
Note that by~\eqref{eq:CRITRED} $a_i< d_i$.
Let $I$ be the set of indeces when $q_i+a_i \geq d_i$.
Hence the above ineguality reads as
$1+a_0-k+\#I\leq 0$. Using this we have
\begin{equation*}\begin{split}
a_0+\sum_i\frac{a_i}{d_i} & \leq\ k-1-\#I + \sum_{i\not\in I} \frac{d_i-1-q_i}{d_i} +
\Big(\sum_{i\in I} \frac{d_i-1-q_i}{d_i}+\sum_{i\in I}\frac{q_i}{d_i}\Big)\\
 \ & \leq\ k-1 + \sum_{i} \frac{d_i-1-q_i}{d_i}.\end{split}\end{equation*}
Hence it is enough to prove that
\begin{equation}\label{eq:=} k-1 + \sum_{i} \frac{d_i-1-q_i}{d_i}< 2|e|,\end{equation}
or $|e| +2-\sum_i 1/d_i< 2|e|$. This transforms into $2-\sum_i 1/d_i < |e|= k-\sum _i q_i/d_i$.
But $ k-\sum _i q_i/d_i\geq k-\sum_i (d_1-1)/d_i=k-3+\sum_i 1/d_i$, hence it is enough to prove that
$k-3 +\sum_i 1/d_i > 2-\sum_i1/d_i$. This is $k+2\sum_i 1/d_i>5$, which is true for $k\geq 3$ since $\sum_i1/d_i>1$.
 In the second case we repeat the argument and we stop at~\eqref{eq:=}, where we have equallity.
\end{proof}

\begin{exam}
For $k=2$, in general, there exists no such bound for $s_{h,0}$, in fact there exist no universal bound at all for $s_{h,0}$.
Eg., if we take the $D_{d+2}$ graph (with all decorations $-2$), with Seifert invariants
$(2,1), (2,1), (d,d-1)$, and if $E_1$ denotes the end vertex of a $(2,1)$ leg, then $E^*_1=s_{[E^*_1]}$ and $s_{h,0}=d/2$,
a number which can be arbitrarily large.
\end{exam}

\begin{lemma} \label{lem:LEMMA2}
 If
$(X,0)$ is a non-cyclic quotient singularity with $k=2$ and $q_1=q_2=1$ then $\FN_{\widetilde{\mathbf{a}}}(n )\leq 0$ for any $n<-1$.
\end{lemma}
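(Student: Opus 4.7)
\smallskip

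\noindent\textbf{Proof plan.}
My plan is to compare $\FN_{\widetilde{\mathbf{a}}}(-t)$, given by~\eqref{eq:-t}, against a strategically shifted version of the minimality expression from~\eqref{eq:-tt}. Concretely, I will work with $\FR_{\widetilde{\mathbf{a}}}(t-1)$: because $\widetilde{\mathbf{a}}$ represents some $s_h$, the criterion~\eqref{eq:CRITRED} guarantees $\FR_{\widetilde{\mathbf{a}}}(s)\leq 0$ for every integer $s>0$. Since $t\geq 2$ forces $t-1\geq 1$, it suffices to establish $\FN_{\widetilde{\mathbf{a}}}(-t)\leq \FR_{\widetilde{\mathbf{a}}}(t-1)$ for every integer $t\geq 2$.

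Substituting $k=2$ and $\nu=3$ in~\eqref{eq:-t} and~\eqref{eq:-tt}, I expect the $a_0$ term and the linear-in-$t$ terms to cancel when subtracting, leaving
\begin{equation*}
\FN_{\widetilde{\mathbf{a}}}(-t)-\FR_{\widetilde{\mathbf{a}}}(t-1)
=-1+\sum_{i=1}^{3}\Bigl[\Bigl\lfloor \tfrac{q_it+a_i-1}{d_i}\Bigr\rfloor-
\Bigl\lfloor \tfrac{q_i(t-1)+a_i}{d_i}\Bigr\rfloor\Bigr].
\end{equation*}
First, for $i=1,2$ the hypothesis $q_i=1$ makes the two arguments of the floors identical, so those summands contribute zero. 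Second, for $i=3$ I will analyze
$$\delta_3(t):=\Bigl\lfloor \tfrac{q_3t+a_3-1}{d_3}\Bigr\rfloor-\Bigl\lfloor \tfrac{q_3t+a_3-q_3}{d_3}\Bigr\rfloor,$$
interpreting it as the number of multiples of $d_3$ in a half-open integer interval of length $q_3-1$. The Seifert normalization $0<q_3<d_3$ makes this length strictly smaller than $d_3$, so $\delta_3(t)\in\{0,1\}$.

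Assembling these observations yields $\FN_{\widetilde{\mathbf{a}}}(-t)=\FR_{\widetilde{\mathbf{a}}}(t-1)-1+\delta_3(t)\leq \FR_{\widetilde{\mathbf{a}}}(t-1)\leq 0$, which is the desired inequality.

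The only mild obstacle is picking the right comparison. The naive shift $t\mapsto t$ only produces $\FN_{\widetilde{\mathbf{a}}}(-t)\leq \FR_{\widetilde{\mathbf{a}}}(t)+1$ (as was already noted in the paper), which is too weak. Shifting instead to $t-1$ introduces a cushion of $-1$ that will absorb the potential unit contribution from the third leg, and the hypothesis $q_1=q_2=1$ is precisely what prevents the first two legs from producing any analogous error term. In particular, the argument would fail without this hypothesis: if any one of $q_1,q_2$ were $\geq 2$, the corresponding leg could also contribute a $+1$, and the cushion would no longer suffice.
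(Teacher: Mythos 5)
Your proposal is correct and follows essentially the same route as the paper: the paper compares $\FN_{\widetilde{\mathbf{a}}}(-(t+1))$ with $\FR_{\widetilde{\mathbf{a}}}(t)$, which is exactly your comparison of $\FN_{\widetilde{\mathbf{a}}}(-t)$ with $\FR_{\widetilde{\mathbf{a}}}(t-1)$ after relabeling, and it likewise uses $q_1=q_2=1$ to make the first two floor differences vanish while bounding the third via $\lfloor x+(q_3-1)/d_3\rfloor\leq\lfloor x\rfloor+1$. Your lattice-point reading of $\delta_3(t)$ is just an equivalent phrasing of that last bound, so there is nothing to add.
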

\begin{proof}
We compare $\FN_{\widetilde{\mathbf{a}}}(-(t+1))$ with $\FR_{\widetilde{\mathbf{a}}}(t)$.
Note that if we use for the third floor expression
$\lfloor x+(q_3-1)/d_3\rfloor\leq \lfloor x\rfloor +1$ we get
$\FN_{\widetilde{\mathbf{a}}}(-(t+1))\leq \FR_{\widetilde{\mathbf{a}}}(t)\leq 0$.
\end{proof}

\subsection{The non-cyclic case}
To end the proof, first assume that $k\geq 3$. Then by Lemma~\ref{lem:LEMMA1} $s_{h,0}<2$, hence
$-\gamma-s_{h,0}>-2$, hence the summation in~\eqref{eq:KELL} reduces to the term corresponding to $n=-1$.
Then, by Claim 2 from \ref{sss:SECONDCLAIM}, we have $\FN_{\widetilde{\mathbf{a}}}(-1)\geq 0$ as well.

By Laufer's criterion $k\geq \nu-1=2$, hence the only remaining case is $k=2$, what we will assume next.

If $q_1=q_2=1$, then by Lemma \ref{lem:LEMMA2} we conclude again that only the term $n=-1$ contributes.
Note that this case includes all the infinite family of Seifert invariants $(-2; (2,1),(2,1), (d_3,q_3))$.

Next assume that $\Gamma$ is one of the classical $\mathbb E_6$, $\mathbb E_7$, and $\mathbb E_8$ graphs.
Their groups $H$ are $\ZZ_3$, $\ZZ_2$, and $\ZZ_1$ respectively (hence the case $\mathbb E_8$ contains no
non--zero $s_h$).
In the first two cases one verifies easily that each non--zero $s_h$ is represented by an
irreducible curve supported in an end exceptional divisor, along which the multiplicity of the minimal cycle
$\Zmin$ is one. This means that the irreducible curve $(C,0)$ (embedded in $(X,0)\subset (\CC^e,0)$, where
$e=3$ is the embedded dimension of $(X,0)$) is intersected by the generic hyperplane with intersection
multiplicity 1. This means that $(C,0)$ is smooth, hence $R^1_1$.
(The statements regarding these examples can also be verified directly.)

In this way we covered all cases, except for the singularities with Seifert invariants
$(-2; (2,1), (3,2), (5,3))$ and $(-2; (2,1), (3,2), (5,2))$.
There are verified below.

\subsection{The exceptional singularity $(-2; (2,1), (3,2), (5,2))$}\label{sec:exceptional16}
In this case one has $H=\ZZ/13\ZZ[E_0^*]$.
The  canonical cycle is
$Z_K = E_{3,1}^*$ with $Z_{K,0}=12/13$.   

Consider any representative of a class $h\in H$, say $hE^*_0$.
Using the generalized Laufer's algorithm one obtains the minimal cycle $s_h$.

\begin{figure}[ht]
\begin{tikzpicture}[scale=.4]
\node [shape=circle,fill,draw=black,scale=.6] (O) at (0,0) {}
node [below left] at (O) {$E_0$}
node [below right] at (O) {$(-2)$};
\node[shape=circle,fill, draw=black,scale=.6] (A) at (-4,0) {}
node [above=4] at (A) {$E_{3,1} (-3)$};
\node[shape=circle,fill, draw=black,scale=.6] (AA) at (-8,0) {}
node [below=4] at (AA) {$E_{3,2} (-2)$};
\node[shape=circle,fill, draw=black,scale=.6] (B) at (4,0) {}
node [above=4] at (B) {$E_{2,1} (-2)$};
\node[shape=circle,fill, draw=black,scale=.6] (C) at (0,-3) {}
node [left=5] at (C) {$E_{1,1} (-2)$};
\node[shape=circle,fill, draw=black,scale=.6] (BB) at (8,0) {}
node [below=4] at (BB) {$E_{2,2} (-2)$};
\path [-,very thick] (A) edge (0,0);
\path [-,very thick] (AA) edge (A);
\path [-,very thick] (B) edge (0,0);
\path [-,very thick] (B) edge (BB);
\path [-,very thick] (C) edge (0,0);
\end{tikzpicture}
\caption{}
\label{fig:exceptional16}
\end{figure}

Note that if
$s_h = a_0E_0^*+  \sum_{i,j} a_{i,j} E_{i,j}^*$, then
$$
\begin{aligned}
s_{h,red}
&= \sum_{i=0}^3 a_i E_i^*= a_0 E_0^* + a_{1,1} E_1^* + (2a_{2,1} + a_{2,2}) E_2^* + (2a_{3,1}+a_{3,2}) E_3^*,
\end{aligned}
$$
where $E_1^*=E_{1,1}^*$, $E_2^*=E_{2,2}^*$, $E_3^*=E_{3,2}^*$   
(see ~\ref{sss:hCRIT}) and hence $\FN_{\widetilde{\mathbf{a}}}(-t)$ equals
\begin{equation}
\label{eq:Nn}
 2 + a_0 - 2t + \Big\lfloor \frac{t+a_{1,1}-1}{2} \Big\rfloor
+ \Big\lfloor \frac{2t+2a_{2,1}+a_{2,2}-1}{3} \Big\rfloor
+ \Big\lfloor \frac{2t+2a_{3,1}+a_{3,2}-1}{5} \Big\rfloor.
\end{equation}

The values of the delta invariant for the minimal generic $h$-curves $C_h$ for $h\neq 0$ are given below.
They are  obtained from Theorem~\ref{thm:SUM}.
Each  $\kappa(s_h)$ is  obtained using Theorem~\ref{thm:mainkappa}.
The values $\FN_{\widetilde{\mathbf{a}}}(n)$ for $-\lfloor s_{h,0} + \gamma \rfloor\leq n \leq -1$
are obtained explicitly from $s_{h,red}$ and~\eqref{eq:Nn}.
\begin{enumerate}
 \item $s_{h,0}\leq 1$, $\lfloor s_{h,0} + \gamma \rfloor = 0$ for:
$$
\array{lll}
\delta(C_3)=\kappa(E^*_{3,1})=0, & \delta(C_8)=\kappa(E^*_{3,2})=0, & \delta(C_9)=\kappa(E^*_{2,2})=0.
\endarray
$$
 \item $s_{h,0}> 1$, $\lfloor s_{h,0} + \gamma \rfloor = 1$,
$\FN_{\widetilde{\mathbf{a}}}(-1) = 0$ for:
$$
\begin{array}{lll}
\delta(C_2)=\kappa(E^*_{3,2}+E^*_{1,1})=1, &
\delta(C_{4})=\kappa(E^*_{3,2}+E^*_{2,2})=1, & \delta(C_7)=\kappa(E^*_{1,1})=0,
\\[0.1cm]
\delta(C_{11})=\kappa(E^*_{3,1}+E^*_{3,2})=1, &
\delta(C_{12})=\kappa(E^*_{3,1}+E^*_{2,2})=1.
\end{array}
$$
 \item $s_{h,0}> 1$, $\lfloor s_{h,0} + \gamma \rfloor = 1$,
$\FN_{\widetilde{\mathbf{a}}}(-1) = 1$ for:
$$
\array{lll}
\delta(C_5)=\kappa(E^*_{2,1})=1, & \delta(C_6)=\kappa(2E^*_{3,1})=2.
\endarray
$$
 \item $s_{h,0}> 1$, $\lfloor s_{h,0}+\gamma\rfloor=2$,
 $\FN_{\widetilde{\mathbf{a}}}(-1)=1$, $\FN_{\widetilde{\mathbf{a}}}(-2)=0$ for:
$$\delta(C_1)=\kappa(E^*_0)=1.$$
 \item\label{sec:exceptional16:special}
 $s_{h,0}> 1$, $\lfloor s_{h,0}+\gamma\rfloor=2$,
 $\FN_{\widetilde{\mathbf{a}}}(-1)=0$, $\FN_{\widetilde{\mathbf{a}}}(-2)=1$ for:
$$\delta(C_{10})=\kappa(E^*_{3,1}+E^*_{1,1})=2.$$
\end{enumerate}

Note that the classes of the irreducible cuts are the following:
$[E_0^*]=1$, $[E_{1,1}^*]=7$, $[E_{2,1}^*]=5$, and $[E_{2,2}^*]=9$, $[E_{3,1}^*]=3$, $[E_{3,2}^*]=8$.

The curves $C_3$, $C_7$, $C_8$, and $C_9$ are smooth irreducible minimal generic, that is, of type $R_1^1$.
The curve $C_1$ and $C_5$ are singular irreducible minimal generic $h$-curves, that is, of type $R_1^0$,
As for the rest, $C_6$ and $C_{10}$ are not irreducible with smooth tangent components, that is, of type
$R_2^1$, see~\ref{ex:delta33} and the remaining ones are nodal, that is, of type~$R_2^2$.

\subsection{The exceptional singularity $(-2; (2,1), (3,2), (5,3))$}\label{sec:exceptional17}
Similarly, one has $H=\ZZ/7\ZZ[E_0^*]$.
The canonical cycle is
$Z_K = E_{3,2}^*$ with $Z_{K,0}=6/7$.

\begin{figure}[ht]
\begin{tikzpicture}[scale=.4]
\node [shape=circle,fill,draw=black,scale=.6] (O) at (0,0) {}
node [below left] at (O) {$E_0$}
node [below right] at (O) {$(-2)$};
\node[shape=circle,fill, draw=black,scale=.6] (A) at (-4,0) {}
node [above=4] at (A) {$E_{3,1} (-2)$};
\node[shape=circle,fill, draw=black,scale=.6] (AA) at (-8,0) {}
node [below=4] at (AA) {$E_{3,2} (-3)$};
\node[shape=circle,fill, draw=black,scale=.6] (B) at (4,0) {}
node [above=4] at (B) {$E_{2,1} (-2)$};
\node[shape=circle,fill, draw=black,scale=.6] (C) at (0,-3) {}
node [left=5] at (C) {$E_{1,1} (-2)$};
\node[shape=circle,fill, draw=black,scale=.6] (BB) at (8,0) {}
node [below=4] at (BB) {$E_{2,2} (-2)$};
\path [-,very thick] (A) edge (0,0);
\path [-,very thick] (AA) edge (A);
\path [-,very thick] (B) edge (0,0);
\path [-,very thick] (B) edge (BB);
\path [-,very thick] (C) edge (0,0);
\end{tikzpicture}
\caption{}
\label{fig:exceptional17}
\end{figure}

The values of the delta invariant for the minimal generic $h$-curves $C_h$ for $h\neq 0$ are given below.
Note that here the expression of $\FN_{\widetilde{\mathbf{a}}}(-t)$ becomes:
$$
\FN_{\widetilde{\mathbf{a}}}(-t)
= 2 + a_0 - 2t + \Big\lfloor \frac{t+a_{1,1}-1}{2} \Big\rfloor
+ \Big\lfloor \frac{2t+2a_{2,1}+a_{2,2}-1}{3} \Big\rfloor
+ \Big\lfloor \frac{3t+3a_{3,1}+a_{3,2}-1}{5} \Big\rfloor.
$$
\begin{enumerate}
\item $s_{h,0}\leq 1$, $\lfloor s_{h,0}+\gamma\rfloor=0$ for:
$$
\delta(C_3)=\kappa(E^*_{3,2})=0.
$$

\item $s_{h,0}> 1$, $\lfloor s_{h,0}+\gamma\rfloor=1$, $\FN_{\widetilde{\mathbf{a}}}(-1)=0$ for:
$$
\delta(C_5)=\kappa(E^*_{2,2})=0, \qquad \delta(C_6)=\kappa(2E^*_{3,2})=1.
$$

\item $s_{h,0}> 1$, $\lfloor s_{h,0}+\gamma\rfloor=2$,
$\FN_{\widetilde{\mathbf{a}}}(-1)=0$,
$\FN_{\widetilde{\mathbf{a}}}(-2)=0$ for:
$$
\delta(C_1)=\kappa(E^*_{3,2}+E^*_{2,2})=1.
$$

\item $s_{h,0}> 1$, $\lfloor s_{h,0}+\gamma\rfloor=2$,
$\FN_{\widetilde{\mathbf{a}}}(-1)=1$,
$\FN_{\widetilde{\mathbf{a}}}(-2)=0$ for:
$$
\delta(C_2)=\kappa(E^*_{3,1})=1.
$$

\item\label{sec:exceptional17:special} $s_{h,0}> 1$, $\lfloor s_{h,0}+\gamma\rfloor=2$,
$\FN_{\widetilde{\mathbf{a}}}(-1)=0$,
$\FN_{\widetilde{\mathbf{a}}}(-2)=1$ for:
$$
\delta(C_4)=\kappa(E^*_{1,1})=1.
$$
\end{enumerate}

The classes of the irreducible cuts are the following:
$[E_0^*]=1$, $[E_{1,1}^*]=4$, $[E_{2,1}^*]=[E_{3,2}^*]=3$, $[E_{2,2}^*]=5$, and $[E_{3,1}^*]=2$.\\

This ends the proof of the theorem.


\begin{thebibliography}{30}
\bibitem{BPV-book}
W.~Barth, C.~Peters, A.~Van de Ven: {\it Compact complex surfaces},
Ergebnisse der Mathematik und ihrer Grenzgebiete, 3. Folge, Band {\bf 4}, A Series of Modern Surveys in Mathematics,
Springer-Verlag 1984.

\bibitem{Brieskorn-Rationale}
E.~Brieskorn, \emph{Rationale {S}ingularit\"{a}ten komplexer {F}l\"{a}chen},
 Invent. Math. \textbf{4} (1967/68), 336--358.

\bibitem{BG80}
R.-O. Buchweitz and G.-M. Greuel, \emph{The {M}ilnor number and deformations of
 complex curve singularities}, Invent. Math. \textbf{58} (1980), no.~3,
 241--281.

\bibitem{CDG-Poincare}
A.~Campillo, F.~Delgado, and S.~M. Gusein-Zade,
\emph{Poincar\'e series of a rational surface singularity}, Invent. Math. \textbf{155} (2004), no.~1, 41--53.

\bibitem{CDGZ-PSuniversalAC}
\bysame, \emph{Universal abelian covers and multi-index filtration},
Func. Anal. and its Appl. \textbf{42} (2008), no.~2, 83--88.

\bibitem{CDGZ-HilbertFunction}
\bysame, \emph{Hilbert function, generalized {P}oincar\'e series and topology
 of plane valuations}, Monatsh. Math. \textbf{174} (2014), no.~3, 403--412.

\bibitem{ji-tesis}
J.~I. Cogolludo-Agust{\'{\i}}n, \emph{Topological invariants of the complement
 to arrangements of rational plane curves}, Mem. Amer. Math. Soc. \textbf{159}
 (2002), no.~756, xiv+75.

\bibitem{kappa1}
J.I. Cogolludo-Agust\'{\i}n, T.~L\'{a}szl\'{o}, J.~Mart\'{\i}n-Morales, and
 A.~N\'{e}methi, \textit{Delta invariant of curves on rational surfaces {I}.
 {T}he analytic approach}, arXiv:1911.07539 [math.AG].

\bibitem{kappa2}
J.I. Cogolludo-Agust\'{\i}n, T.~L\'{a}szl\'{o}, J.~Mart\'{\i}n-Morales, and
 A.~N\'{e}methi, \textit{The delta invariant of curves on rational surfaces {II}.
 Poincar\'e series and topological aspects}, arXiv:2003.07110 [math.GT].

\bibitem{NEW}
J.I. Cogolludo-Agust\'{\i}n, T.~L\'{a}szl\'{o}, J.~Mart\'{\i}n-Morales, and
 A.~N\'{e}methi, \textit{manuscript in preparation}.


\bibitem{CM}
J.~I. Cogolludo-Agust\'{\i}n and J. Mart\'{\i}n-Morales,
\emph{The correction term for the {R}iemann-{R}och formula of cyclic quotient
singularities and associated invariants}, Rev. Mat. Complut. \textbf{32}
(2019), no.~2, 419--450.

\bibitem{GW}
S. ~Goto and K. ~Watanabe, \emph{On graded rings I.}, {J. Math. Soc. Japan}
{\bf 30} (1978), no. 2, 179--213.

\bibitem{Greuel}
G.-M. Greuel, \emph{Deformationen spezieller Kurvensingularit\"aten und eine Formel von Deligne},
Teil II der Habilitationschrift Bonn 1979.

\bibitem{Greuel2}
\bysame, \emph{On deformations of curves and a formula of Deligne,}
In: Algebraic geometry, La R\'abida 1981, 141--168, Lecture Notes in Math. 961.

\bibitem{Hironaka}
H.~Hironaka, \emph{On the arithmetic genera and the effective genera of
 algebraic curves}, Mem. Coll. Sci. Univ. Kyoto Ser. A. Math. \textbf{30}
 (1957), 177--195.

\bibitem{KSB}
J.~Koll\'ar and N.I.~Shepherd-Barron, \emph{Threefolds and
deformations of surface singularities}, Invent. math. 91
(1988), 299-338.

\bibitem{LNN}
T.~L\'aszl\'o, J.~Nagy and A.~N\'emethi,
\emph{Surgery formulae for the Seiberg--Witten invariant of plumbed 3-manifold},
{arXiv:1702.06692 [math.GT]} (2017).

\bibitem{LNNdual}
\bysame, \emph{Combinatorial duality for Poincar\'e series, polytopes and invariants of plumbed 3-manifolds},
{arXiv:1805.03457 [math.AG]} (2018).

\bibitem{LNehrhart}
T.~L\'aszl\'o and A.~N\'emethi, \emph{Ehrhart theory of polytopes and Seiberg--Witten invariants of plumbed 3-manifolds},
{Geom. \& Top.} {\bf 18} (2014), 717--778.

\bibitem{LSzMonoids}
T.~L\'aszl\'o and Zs. Szil\'agyi, \emph{Non-normal affine monoids, modules and Poincar\'e series of plumbed 3-manifolds},
Acta Math. Hun. {\bf 152} 2 (2017), 421--452.

\bibitem{LSzPoincare}
\bysame, \emph{On Poincar\'e series associated with links of normal surface singularities},
{Trans. of AMS.} (2019), https://doi.org/10.1090/tran/7802.

\bibitem{Laufer-rational}
H.B. Laufer, \emph{On rational singularities}, Amer. J. Math. \textbf{94}
 (1972), 597--608.

\bibitem{McKay-Graphs}
J.~McKay, \emph{Graphs, singularities, and finite groups}, The {S}anta {C}ruz
 {C}onference on {F}inite {G}roups ({U}niv. {C}alifornia, {S}anta {C}ruz,
 {C}alif., 1979), Proc. Sympos. Pure Math., vol.~37, Amer. Math. Soc.,
 Providence, R.I., 1980, pp.~183--186.

\bibitem{NemOSZ}
A.~N\'emethi, \emph{On the Ozsv\'ath--Szab\'o invariant of negative definite plumbed 3-manifolds},
{Geom. \& Top.} {\bf 9} (2005), 991--1042.

\bibitem{NemGR}
\bysame, \emph{Graded roots and singularities},
 Proc. {\em Advanced
School and Workshop} on {\em Singularities in Geometry and
Topology} ICTP (Trieste, Italy),
 World Sci. Publ., Hackensack, NJ, 2007, 394--463.

\bibitem{Nem-PS}
\bysame, \emph{Poincar\'e series associated with surface singularities}, in: Singularities I:
Algebraic and Analytic Aspects, International Conference in honor of the
60th birthday of L\^e D\~ung Tr\'ang, Cuernavaca, Mexico, 2007, Contemp. Math. {\bf 474} (2008), 271--299.

\bibitem{Nem-CLB}
\bysame, \emph{The cohomology of line bundles of splice-quotient singularities}, Adv. Math. {\bf 229} (2012), 2503--2524.

\bibitem{Nem-ICM}
\bysame, \textit{Pairs of invariants of surface singularities}, Proc. Int. Cong.
 of Math. 2018 Rio de Janeiro, vol.~1, 2018, pp.~745--776.

\bibitem{NOem} \bysame, \emph{The embedding dimension of weighted homogeneous surface singularities},
{J. of Topology} {\bf 3} (2010), no. 3, 643--667.

\bibitem{neumann.abel}
W.D. ~Neumann, \emph{Abelian covers of quasihomogeneous surface singularities},
Singularities, Part 2 (Arcata, Calif., 1981), Proc. Sympos. Pure Math., vol. 40,
Amer. Math. Soc., Providence, RI (1983), 233--243.

\bibitem{Nikulin-delPezzo}
V.~V. Nikulin, \emph{del {P}ezzo surfaces with log-terminal singularities},
 Mat. Sb. \textbf{180} (1989), no.~2, 226--243, 304.

\bibitem{Pinkham} H.~Pinkham,
\emph{Normal surface singularities with $\mathbb{C}^*$ action},
{Math. Ann.} {\bf 227} no. 2 (1977), 183--193.

\bibitem{StevensThesis}
J.~Stevens, \emph{Kulikov singularities, a study of a class of complex surface
 singularities with their hypeplane section}, Ph.D. thesis, Leiden, 1985.

\end{thebibliography}
\end{document}